\title{An identity in the Bethe subalgebra of $\CC[\Sn]$}
\def\@fnsymbol#1{\ensuremath{\ifcase#1\or \dagger\or \ddagger\or\mathsection\or \mathparagraph\or \|\or **\or \dagger\dagger \or \ddagger\ddagger \else\@ctrerr\fi}}\makeatother
\author{Kevin Purbhoo\thanks{Research supported by NSERC Discovery Grant RGPIN-04741-2018.}}
\setlist{topsep=.5ex, itemsep=.5ex, parsep=0ex, partopsep=0ex}
\newcommand{\CC}{\mathbb{C}}
\newcommand{\QQ}{\mathbb{Q}}
\newcommand{\RR}{\mathbb{R}}
\newcommand{\PP}{\mathbb{P}}
\newcommand{\calD}{\mathcal{D}}
\newcommand{\calF}{\mathcal{F}}
\newcommand{\calY}{\mathcal{Y}}
\newcommand{\gl}{\mathfrak{gl}}
\newcommand{\Wr}{\mathrm{Wr}}
\newcommand{\Gr}{\mathrm{Gr}}
\newcommand{\pker}{\mathop{\mathrm{pker}}}
\newcommand{\sgn}{\mathrm{sgn}}
\newcommand{\ord}{\mathrm{ord}}
\newtheorem{lemma}{Lemma}[section]
\newtheorem{theorem}[lemma]{Theorem}
\newtheorem{proposition}[lemma]{Proposition}
\newtheorem{corollary}[lemma]{Corollary}
\newtheorem{conjecture}[lemma]{Conjecture}
\theoremstyle{definition}
\newtheorem{example}[lemma]{Example}
\newtheorem*{ack}{Acknowledgements}
\newtheorem{remark}[lemma]{Remark}
\numberwithin{equation}{section}
\numberwithin{figure}{section}
\numberwithin{table}{section}
\definecolor{DarkBlue}{rgb}{0, 0.1, 0.55}
\definecolor{DarkRed}{rgb}{0.45, 0, 0}
\newcommand{\defn}[1]{\textbf{\textit{#1}}}
\newcommand{\conjugate}[1]{#1^*}
\newcommand{\scell}{\mathcal{X}^\lambda}
\newcommand{\dualscell}{\mathcal{X}^{\conjugate{\lambda}}}
\newcommand{\du}{\partial_u}
\newcommand{\End}{\mathrm{End}}
\newcommand{\indicial}{\mathrm{Ind}}
\newcommand{\Sn}{\mathfrak{S}_n}
\newcommand{\symgp}{\mathfrak{S}}
\newcommand{\idSn}{1_{\Sn}}
\newcommand{\bethe}{\mathcal{B}_n}
\newcommand{\Dalg}{\mathbb{D}}
\newcommand{\SP}{\mathrm{SP}}
\newcommand{\Dcoeff}[2]{\langle #1 \rangle_{#2}}
\newcommand{\SgnRep}{\mathbb{A}}
\newcommand{\schur}{\mathsf{s}}
\newcommand{\uea}{U\big(\gl_m(\CC[t])\big)}
\newcommand{\exterior}[1]{\wedge^{#1}}
\newcommand{\extalg}{\exterior{\bullet}}
\begin{document}
\maketitle

\begin{abstract}
As part of the proof of the Bethe ansatz conjecture for the Gaudin
model for $\gl_n$,
Mukhin, Tarasov, and Varchenko described a 
correspondence between inverse Wronskians of polynomials 
and eigenspaces of the Gaudin Hamiltonians.
Notably, this correspondence afforded the first proof of the 
Shapiro--Shapiro conjecture.
In the present paper, we give an identity in the group algebra of 
the symmetric group,
which allows one to establish the correspondence directly, 
without using the Bethe ansatz.
\end{abstract}

%
%
%

%
%
\section{Introduction}

Let $f_1(u), \dots, f_m(u) \in \CC(u)$ be linearly independent 
rational functions.  The \defn{Wronskian}
\[
   \Wr(f_1, \dots, f_m) = 
    \begin{vmatrix}
    f_1 & f_1' & f_1'' & \dots & f_1^{(m-1)} \\
    f_2 & f_2' & f_2'' & \dots & f_2^{(m-1)} \\
    \vdots & \vdots & \vdots &  & \vdots \\
    f_m & f_m' & f_m'' & \dots & f_m^{(m-1)} \\
    \end{vmatrix}
\,,
\]
is also a rational function, which, up to a scalar multiple, 
depends only on the span of $f_1, \dots, f_m$.
It is therefore reasonable to talk about the Wronskian of
a finite dimensional subspace $V \subset \CC(u)$:
if $V  = \langle f_1, \dots, f_m \rangle$ is the subspace
of $\CC(u)$ spanned by $f_1,\dots, f_m$, we define
$\Wr_V(u) \in \CC(u)$ to the unique scalar multiple of
$\Wr(f_1, \dots, f_m)$ which is a \defn{monic rational function}, 
i.e. a ratio of two monic polynomials.
We will mainly be interested in the case where 
the basis elements $f_1, \dots, f_m$ are polynomials, in
which case $\Wr_V$ is a monic polynomial.

Given a polynomial $w(u)  = (u+z_1) \dotsm (u+z_n) \in \CC[u]$, 
the \defn{inverse Wronskian problem} for $w(u)$ is to 
find all subspaces of polynomials $V \subset \CC[u]$ such that
$\Wr_V = w$.
There are finitely many such $V$ of any particular dimension $m$.
Moreover, if one can find all the $n$-dimensional solutions, then it
is straightforward to find all solutions of any other dimension
(see Proposition~\ref{prop:dimensions});
we will therefore focus on the case $m=n$.  

The inverse Wronskian problem appears
in many guises throughout mathematics. It can be reformulated 
as a Schubert intersection problem, or in terms of linear series on
$\PP^1$, or in terms of rational curves in with 
with prescribed flexes.  It is also a special case 
of the pole placement problem in control theory \cite{EG-pole}.  
The survey \cite{Sot-F} discusses many of these alternate formulations 
along with a variety applications.

There is also a deep connection with representation theory
and quantum integrable systems.
Over a series of papers (see \cite{MTV-reality}),
Mukhin, Tarasov and Varchenko showed that the problem of finding these
solutions is equivalent to the problem of finding eigenvectors of the
Bethe algebra for the Gaudin model.  The Bethe algebra is defined as
a commutative subalgebra of the universal enveloping algebra
$\uea$ \cite{MTV-transverse}; 
however by Schur--Weyl duality,
it has a quotient $\bethe(z_1, \dots, z_n)$ which can be identified 
with a commutative subalgebra of $\CC[\Sn]$, 
the group algebra of the symmetric group~\cite{MTV-Sn}.
$\bethe(z_1, \dots, z_n)$ is called the Bethe subalgebra 
of $\CC[\Sn]$ (of Gaudin type).

Briefly, here's how the equivalence works.
One concretely writes down certain operators (the \emph{Gaudin
Hamiltonians}), which in this paper are
denoted $\beta_{k,l}^- \in \CC[\Sn]$, $k,l \leq n$.   (The ``$-$'' in our
notation requires some explanation; this will be provided shortly.)
These operators commute pairwise, and they are generators 
of $\bethe(z_1, \dots, z_n)$.
We combine them to form a linear differential operator with coefficients
in $\CC[\Sn] \otimes \CC(u)$:
\[
  \calD^-_n
          = \frac{1}{w(u)} \bigg(
       \sum_{k,l} (-1)^{k} \beta^-_{k,l} u^{n-k-l} \du^{n-k}\bigg)
\,.
\]
One can then restrict this differential operator to any eigenspace 
$E$ of $\bethe(z_1, \dots, z_n)$, which gives a 
scalar valued differential operator $\calD^-_{E}$ of order $n$, with 
coefficients in $\CC(u)$.
\begin{theorem}[Mukhin--Tarasov--Varchenko]
\label{thm:invwrsols}
The kernel of $\calD^-_{E}$ is an $n$-dimensional vector space 
$V_E \subset \CC[u]$, which is a solution of the inverse Wronskian problem 
for $w(u)$.  
Furthermore all $n$-dimensional solutions to the inverse Wronskian 
problem are of this form.
\end{theorem}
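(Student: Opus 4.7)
The plan is to split the theorem into three claims: (i) the kernel of $\calD^-_E$ is an $n$-dimensional space of polynomials; (ii) its Wronskian is $w(u)$; and (iii) every $n$-dimensional solution of the inverse Wronskian problem arises as $V_E$ for some eigenspace $E$ of $\bethe(z_1,\dots,z_n)$. Claims (i) and (ii) should reduce to standard ODE calculations, while (iii) is where the identity in $\bethe(z_1,\dots,z_n)$ does the essential work.

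For (i), I would rewrite $w(u)\calD^-_n = \sum_{k,l}(-1)^k\beta^-_{k,l}u^{n-k-l}\du^{n-k}$ as a differential operator with polynomial coefficients in $u$ taking values in $\bethe(z_1,\dots,z_n)$. Restricting to $E$ gives a scalar polynomial differential operator $L_E$ of order $n$, and standard Fuchsian theory yields $\dim\ker L_E = n$ at any regular point. To conclude that kernel elements are polynomials, I would identify the leading coefficient of $L_E$ (expected to be $w(u)$ itself, using that $\beta^-_{0,0}$ is the identity and that the $\beta^-_{0,l}$ recover the coefficients of $w$), so that the only candidate singularities are the roots of $w$ and the point at infinity, and then analyze the indicial equation at each point to show the exponents are non-negative integers, forcing power-series solutions to terminate.

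For (ii), I would apply Abel's formula: for a linear ODE $\sum_i q_i(u)\du^i y = 0$, the Wronskian $W$ of a fundamental solution set satisfies $W'/W = -q_{n-1}/q_n$. Reading off $q_{n-1}$ from the defining expression of $\calD^-_n$ and using $q_n = w(u)$, I expect to obtain $W'/W = w'/w$, so $W = c\cdot w$ for some constant $c$; monicity of both sides forces $c=1$.

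Part (iii) is the main obstacle and the step that genuinely uses the paper's contribution. Given any $V$ with $\Wr_V = w$, the unique monic order-$n$ linear ODE $L_V$ with kernel $V$ is determined by $L_V y = \Wr(y,f_1,\dots,f_n)/\Wr(f_1,\dots,f_n)$ for any basis $f_1,\dots,f_n$ of $V$. I would try to match the coefficients of $L_V$ against those of $w(u)\calD^-_n$ to extract candidate scalars $\lambda_{k,l}$ for each $\beta^-_{k,l}$, and then invoke the identity in $\bethe(z_1,\dots,z_n)$ to verify that $\beta^-_{k,l}\mapsto \lambda_{k,l}$ extends to a genuine algebra homomorphism, thereby producing the eigenspace $E$ with $V_E = V$. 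A pure dimension-counting alternative would require comparing the spectrum of $\bethe(z_1,\dots,z_n)$ with the count of inverse Wronskian solutions (via Schubert calculus) and establishing injectivity of $E \mapsto V_E$, but even this route seems to require the identity to certify that the candidate eigenvalues are consistent across all relations among the $\beta^-_{k,l}$.
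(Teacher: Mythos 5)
Your approach to claim (i) contains a genuine gap, and it is exactly the gap that Theorem~\ref{thm:main} is designed to bypass. The final step, ``analyze the indicial equation at each point to show the exponents are non-negative integers, forcing power-series solutions to terminate,'' does not hold. Non-negative integer indicial exponents at a regular singular point only prescribe the leading powers of the local Frobenius solutions; they say nothing about whether those power series terminate, and when exponents differ by integers one generically also gets logarithmic solutions. The conditions that rule out log terms and make the Frobenius series terminate are precisely the relations that the paper's introduction describes as ``difficult to work with explicitly'' and ``impractical'' to verify directly for the $\beta^-_{k,l}$. This is not a detail that Fuchsian theory supplies for free; it is the heart of the matter.

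The paper's actual route to (i) goes through the identity $\calD^+_E\calD^-_E = \du^{2n}$: any $g\in\ker(\calD^-_E)$ then satisfies $\du^{2n}g = 0$, i.e.\ $g$ is a polynomial of degree at most $2n-1$, with no local analysis whatsoever; and since $\ker(\du^{2n})$ has dimension $2n$ while $\dim\ker(\calD^\pm_E)\le n$ by Proposition~\ref{prop:order}, both kernels must have dimension exactly $n$. So the main identity is needed for (i), not only for (iii). Your claim (ii) matches Proposition~\ref{prop:FDO} and Corollary~\ref{cor:invwrsols}. For (iii), your first route (extracting candidate eigenvalues from $L_V$ and certifying an algebra homomorphism via the one identity) would have to check consistency with every relation in $\bethe(z_1,\dots,z_n)$, which the single identity $\calD^+_n\calD^-_n=\du^{2n}$ cannot do on its own. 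Your dimension-counting alternative is in fact what the paper uses (Theorem~\ref{thm:schubert} and Remark~\ref{rmk:bijectivecorrespondence}): for generic $(z_1,\dots,z_n)$ there are $\dim M^\lambda$ eigenspaces of type $\lambda$ and $\dim M^\lambda$ solutions in $\scell$, and injectivity of $E\mapsto V_E$ is automatic because an $n$-dimensional $V_E$ determines the monic order-$n$ operator $\calD^-_E$ and hence the eigenvalue tuple; no further appeal to the identity is required at that stage.
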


Theorem~\ref{thm:invwrsols} is far from obvious.
Arguably the most mysterious 
part is the dimension of the space of polynomials in the kernel.
In general, if one writes down a linear differential equation of 
order $n$ with coefficients in $\CC(u)$, 
it is rare for it to have any rational solutions,
let alone an $n$-dimensional space of
polynomial solutions.
Of course, one can write down equations for
when this occurs, but these are difficult to work with explicitly,
and checking directly that the operators $\beta^{-}_{k,l}$ satisfy
these equations seems to be impractical.
Mukhin, Tarasov and Varchenko's proof of Theorem~\ref{thm:invwrsols}
is part of a larger body of work on the Bethe ansatz, 
a technique from mathematical
physics for finding the eigenvectors to certain problems involving
commuting operators.  In a nutshell, they show that when one applies
the Bethe ansatz method to the Gaudin model, the Bethe ansatz equations 
for finding the eigenvectors can be reinterpreted as equations for solving
the inverse Wronskian problem.
The formulation in terms of
$\bethe(z_1, \dots, z_n) \subset \CC[\Sn]$ is derived from theorems
about the infinite dimensional Bethe algebra inside 
$\uea$ using Schur--Weyl duality.

The main goal of this paper is to give an account of 
Theorem~\ref{thm:invwrsols}, which is short, mostly self-contained, 
operates strictly inside $\CC[\Sn]$, 
and does not involve 
finding the eigenvectors of the Bethe algebra.  
Our main result (Theorem~\ref{thm:main}) is an identity in 
$\bethe(z_1, \dots, z_n)$, which accomplishes this.
We introduce a second operator $\calD^+_n$, 
which is related to $\calD^-_n$ by an anti-involution of the algebra
of $\CC[\Sn]$-valued linear differential operators.
All minus signs in the formula are changed to plusses, and the order 
of factors is reversed from left to right.  
\[
  \calD^+_n
          = 
   \bigg(\sum_{k,l} \du^{n-k}u^{n-k-l} \beta^+_{k,l} \bigg) \frac{1}{w(u)}
\,.
\]
The coefficients $\beta^+_{k,l}$ are given by a similar formula 
to $\beta^-_{k,l}$, but again, without signs.
We show that the elements $\beta^+_{k,l}$ are also generators 
for $\bethe(z_1,\dots,z_n)$. 
This means one can also restrict $\calD^+_n$ to any eigenspace $E$
of $\bethe(z_1, \dots, z_n)$ to get a scalar valued differential operator
$\calD^+_E$.
\begin{theorem}
\label{thm:main}
In $\Dalg[\Sn]$, the algebra of $\CC[\Sn]$-valued linear differential
operators, we have the identity
\[
\calD^+_n \calD^-_n = \du^{2n}
\,.
\]
\end{theorem}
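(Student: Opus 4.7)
The plan is to verify the identity directly in $\Dalg[\Sn]$ by expanding the product $\calD^+_n \calD^-_n$ and showing that all but the top term cancel.

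First, I would confirm that the coefficient of $\du^{2n}$ on the left-hand side equals $1$. The top-degree-in-$\du$ piece of $\calD^-_n$ should reduce to $\du^n$: this is the statement that $\sum_l \beta^-_{0,l} u^{n-l} = w(u)\cdot \idSn$ in $\CC[\Sn]\otimes\CC[u]$, which exactly cancels the $1/w(u)$ prefactor. An analogous identity is expected for $\calD^+_n$. Both should be immediate from the explicit formulas for $\beta^\pm_{0,l}$, and granting them the leading $\du^n\cdot\du^n = \du^{2n}$ term of the product matches the right-hand side.

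The bulk of the work is then to show that the coefficient of every other monomial $u^j \du^k$ in $\calD^+_n\calD^-_n$ vanishes in $\CC[\Sn]$. My approach would be to expand the product by pushing every $\du$ past every $u$ using $\du u = u\du + 1$, and clearing the factor $1/w(u)^2$ by multiplying through by $w(u)^2$. Grouping the expansion by the monomial $u^j\du^k$ then yields, for each $(j,k)\neq (0,2n)$, a required identity in $\CC[\Sn]$ among products of the form $\beta^+_{k_1,l_1}\beta^-_{k_2,l_2}$.

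The hard step is establishing this entire family of identities. A brute term-by-term verification looks combinatorially forbidding, so I would search for structural shortcuts. The anti-involution that sends $\calD^-_n$ to $\calD^+_n$ and fixes $\du^{2n}$ (up to sign) relates the coefficient of $u^j\du^k$ to that of a symmetric monomial, roughly halving the number of independent relations to check. A more ambitious reduction is to package all the cancellations into a single generating-function identity: introduce a formal variable $v$ and look for a master equation whose coefficients in $v$ recover all the needed relations simultaneously. Failing that, since $\bethe(z_1,\dots,z_n)$ is a Schur--Weyl quotient of the Bethe subalgebra of $\uea$, the cleanest route may be to lift the statement to the infinite-dimensional setting, where the analogous identity among generators of the Bethe algebra can plausibly be given a Capelli-type determinantal origin, and then push the resulting relation back down to $\CC[\Sn]$ via the quotient.
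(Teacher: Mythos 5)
Your plan correctly isolates what must be shown — the leading coefficient is $1$ and every lower-order coefficient vanishes in $\CC[\Sn]$ — but the core of the argument is missing, and none of the three shortcuts you float supplies it. The paper does not expand in monomials $u^j\du^k$ at all. It rewrites the operators with partial-fraction coefficients $q_X(u) = \prod_{a\in X}\frac{1}{u+z_a}$, so that $\calD^+_n = \sum_X \du^{n-|X|}q_X\alpha^+_X$ and $\calD^-_n = \sum_Y (-1)^{|Y|}\alpha^-_Y q_Y\du^{n-|Y|}$, and then groups the expanded product not by degree but by the \emph{supported permutation} $\sigma_Z$, i.e.\ by the product $\sigma = \delta\varepsilon$ together with the union $Z = X\cup Y$ of the supports. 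This reduces the theorem to showing that a single scalar differential operator $F_{\sigma_Z}$ vanishes for every nonempty $Z$. That vanishing is then proved by two combinatorial mechanisms that are nowhere in your proposal: (a) when $|X\cap Y|\geq 2$, a sign-reversing involution on factorizations $\delta\varepsilon=\sigma$ (multiply on the right/left by a transposition in $\symgp_{X\cap Y}$) kills those terms outright; (b) when $|X\cap Y|\leq 1$, one computes that the generating polynomials of the surviving factorizations are $p_\nu(s,t)=\prod_i(t^{\nu_i}-s^{\nu_i})$ (indexed by the cycle type $\nu$ of $\sigma$) and $p_\nu/(s-t)$, and pushing these through the bilinear map $\Phi(s^it^j,g)=\du^i g\du^j$ with $\Phi((s-t)p,g)=\Phi(p,g')$ reduces everything to the telescoping identity $q'_{[n]}+\sum_a q_a q_{[n]}=0$. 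This reorganization by support and permutation, not degree, is the missing key idea, and without it the ``brute term-by-term'' expansion you correctly call forbidding stays forbidding.

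Your third suggested escape route — lift the identity to the full Bethe algebra in $\uea$, find a Capelli-type proof there, and push it down by Schur--Weyl — is also problematic on its own terms. It runs directly against the paper's stated aim of proving the identity entirely inside $\CC[\Sn]$ without Schur--Weyl duality or the Bethe ansatz, and the paper's concluding section flags a concrete obstruction to such a lift: in the $\uea$ picture there is no uniform bound on the degrees of the polynomials involved, so it is not even clear that an analogue of $\calD^+\calD^-=\du^{2n}$ holds at that level. The anti-involution observation is genuine (the product $\calD^+_n\calD^-_n$ is indeed $\omega$-invariant), but it at best halves a calculation you have not shown how to carry out, and the generating-function idea is offered without any candidate master identity, so neither closes the gap.
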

We can now argue as follows. 
If $E$ is any eigenspace of $\bethe(z_1,\dots, z_n)$, we obtain
the scalar valued differential operator identity
\[
     \calD^+_E \calD^-_E = \du^{2n}
\,.
\]
Since $\ker(\du^{2n})$ is a $2n$-dimensional subspace of $\CC[u]$,
and $\ker(\calD^+_E), \ker(\calD^-_E)$ both have dimension at most $n$,
we see that $V_E = \ker(\calD^-_{E})$ must be an
$n$-dimensional subspace of $\ker(\du^{2n})$; in particular $V_E$
is an $n$-dimensional space of polynomials.
It now follows readily (see Corollary~\ref{cor:invwrsols}) 
that $V_E$
is a solution to the inverse Wronskian problem for $w(u)$.  
The fact that every solution arises in this way follows as well,
because we know how many solutions there are 
to each of the two problems (see Remark~\ref{rmk:bijectivecorrespondence}).

An important consequence of Theorem~\ref{thm:invwrsols} is the
reality theorem, conjectured by B. and M. Shapiro in the mid-1990s
and proved by Mukhin, Tarasov and Varchenko in \cite{MTV-reality}
(see also~\cite{EG-elementary,LP,MTV-transverse, Sot-F}).
If $z_1, \dots, z_n$ are real, then the operators $\beta^-_{k,l}$
are real and self-adjoint with respect to the standard inner product 
on $\CC[\Sn]$ (for which the group elements form an orthonormal basis); 
hence $\bethe(z_1, \dots, z_n)$ is diagonalizable over $\RR$, 
and the entire argument above goes through with $\RR$ in place of $\CC$.

\begin{theorem}
If $z_1, \dots, z_n \in \RR$, then all solutions to the inverse Wronskian
problem for $w(u)$ are real.
\end{theorem}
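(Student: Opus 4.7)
The plan is to run the derivation of Theorem~\ref{thm:invwrsols} from Theorem~\ref{thm:main} verbatim, with $\RR$ in place of $\CC$, leveraging the reality and self-adjointness stated immediately before the theorem. Concretely, if $z_1,\dots,z_n\in\RR$, then each Gaudin Hamiltonian $\beta^-_{k,l}$ lies in $\RR[\Sn]$ and is self-adjoint for the standard inner product on $\CC[\Sn]$. Since the $\beta^-_{k,l}$ commute pairwise and generate $\bethe(z_1,\dots,z_n)$, the spectral theorem for commuting families of real symmetric operators acting on $\RR[\Sn]$ supplies an orthogonal decomposition $\RR[\Sn]=\bigoplus_j E_{j,\RR}$ into joint eigenspaces with real eigenvalues; complexifying gives the eigenspace decomposition $\CC[\Sn]=\bigoplus_j E_j$ on which the argument following Theorem~\ref{thm:main} is based, with each $E_j$ the complexification of $E_{j,\RR}$.

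Next, I would fix one such eigenspace $E=E_j$ and restrict $\calD^-_n$ to $E$. Each coefficient of the resulting operator $\calD^-_E$ is a product of a real scalar (the eigenvalue of some $\beta^-_{k,l}$ on $E$) with a real rational function of $u$ (recall $w(u)\in\RR[u]$), so $\calD^-_E$ has coefficients in $\RR(u)$. The derivation of Theorem~\ref{thm:invwrsols} from Theorem~\ref{thm:main} makes no use of the ground field, so it applies here and identifies $V_E=\ker(\calD^-_E)$ as an $n$-dimensional solution of the inverse Wronskian problem for $w$. Because $\calD^-_E$ is defined over $\RR$, its kernel $V_E$ is stable under complex conjugation, hence admits a basis of real polynomials.

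Finally, the surjectivity clause of Theorem~\ref{thm:invwrsols} ensures that the $V_{E_j}$ exhaust all $n$-dimensional solutions of the inverse Wronskian problem for $w$, so every such solution is real; solutions of other dimensions $m\neq n$ are obtained from the $n$-dimensional ones via Proposition~\ref{prop:dimensions}, and so are real as well. The only genuinely new input needed is the reality and self-adjointness of the $\beta^-_{k,l}$; I expect this to be the main, though still routine, obstacle, reducing to a direct inspection of their explicit formula together with the fact that the group elements form an orthonormal basis of $\CC[\Sn]$. Once that computation is in hand, the reality theorem is a formal corollary of Theorem~\ref{thm:main}.
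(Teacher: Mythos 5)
Your proposal is correct and is essentially an expanded version of the paper's own (very terse) argument, which simply observes that for real $z_i$ the generators $\beta^-_{k,l}$ are real and self-adjoint for the standard inner product on $\CC[\Sn]$, so the Bethe algebra is diagonalizable over $\RR$ and the derivation of Theorem~\ref{thm:invwrsols} goes through over $\RR$. You fill in the same steps in more detail: self-adjointness via $\sigma\mapsto\sigma^{-1}$ preserving $\symgp_X$ and $\sgn$, the real spectral decomposition, reality of the coefficients of $\calD^-_E$, conjugation-stability of $\ker(\calD^-_E)$, and the surjectivity clause plus Proposition~\ref{prop:dimensions} to cover all dimensions.
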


A natural question is whether there are analogous results for $\calD^+_n$.
For a partial answer, consider the inverse Wronskian problem 
for \emph{rational functions}:
given $g(u) \in \CC(u)$, find $V \subset \CC(u)$ such that
$\Wr_V = g$.  Theorem~\ref{thm:main} 
implies that if $E$ is an eigenspace of $\bethe(z_1,\dots,z_n)$, 
then $\ker(\calD^+_{E})$ is an $n$-dimensional
subspace of $\CC(u)$, which is a solution to
the inverse Wronskian problem for the rational function $\frac{1}{w(u)}$.
However, in this case we are not getting \emph{all} rational solutions: 
unlike the polynomial inverse Wronskian problem, the rational inverse 
Wronskian has infinitely many solutions of any given dimension.
We discuss this further in Section~\ref{sec:conclusion}.

This paper is structured as follows.  Sections~\ref{sec:diff} 
and~\ref{sec:bethe} provide background on 
the fundamental differential operator of a subspace $V \subset \CC(u)$, 
and on the Bethe subalgebra of $\CC[\Sn]$.  
The proof of Theorem~\ref{thm:main} is given in Section~\ref{sec:main}.  
Sections~\ref{sec:commute}, \ref{sec:schubert} and \ref{sec:duality}
establish additional properties of the algebra $\bethe(z_1, \dots, z_n)$, 
beginning with a combinatorial proof of commutativity, and
culminating in the
fact that the operators $\beta^+_{k,l} \in \CC[\Sn]$ are 
generators (Theorem~\ref{thm:commute}).  
We conclude with a discussion of the mysterious operator $\calD^+_n$, 
and other open questions in Section~\ref{sec:conclusion}.
In keeping with our stated objectives, our exposition includes 
proofs of 
known results whenever the original proof was based on the 
Bethe ansatz or derived from identities in algebras other than 
$\CC[\Sn]$, e.g. using Schur--Weyl duality.

\begin{ack}
This work became possible thanks to discussions
during the Fields Institute Thematic Program on Combinatorial Algebraic
Geometry in 2016, with Joel Kamnitzer, Frank Sottile, and David Speyer.
I thank Vitaly Tarasov and an anonymous referee, for pointing out 
some recent relevant references.
Innumerable calculations for this project were carried out using 
\texttt{Sage} \cite{sage}.
\end{ack}

%
%
\section{Fundamental differential operators}
\label{sec:diff}

Let $\Dalg = \CC(u)[\du]$ denote the algebra of complex valued
linear differential operators
in variable $u$, with rational function coefficients.
The algebra $\CC(u)$ of rational functions is a commutative subalgebra of 
$\Dalg$, and $\Dalg$ has the commutation relations
\begin{equation}
\label{eqn:noncommute}
   \du g - g \du = g'
\end{equation}
for $g = g(u) \in \CC(u)$.
Every element 
$\Psi \in \Dalg$ can be expressed uniquely in the form
\[
   \Psi = \sum_{j=0}^m  \psi_j(u) \du^{j}
\]
where $\psi_0(u), \dots, \psi_m(u) \in \CC(u)$.
If $\psi_m(u) \neq 0$, then $m = \ord(\Psi)$ is called the 
\defn{order} of $\Psi$, 
and we say $\Psi$ is a \defn{monic operator} if $\psi_m(u) = 1$.
Write $\Dcoeff{\Psi}{j} = \psi_j(u)$, to mean the coefficient of $\du^j$
in this canonical representation.

We view $\Psi$ as a linear differential operator 
$\Psi : \CC(u) \to \CC(u)$, via
\[
   \Psi : g(u) \mapsto \Dcoeff{\Psi g}{0} = \sum_{j=0}^m  \psi_j(u) g^{(j)}(u)
\,.
\]
Write $\ker(\Psi ) \subset \CC(u)$ for the kernel of this operator,
and $\pker(\Psi ) = \ker(\Psi ) \cap \CC[u]$ for the subspace of polynomials
in $\ker(\Psi )$.  Note that when we write $\Psi  g$ or $\Psi g(u)$, 
this will \emph{always} mean the
product of $\Psi $ and $g$ in $\Dalg$, and should not be confused with
the rational function $\Dcoeff{\Psi g}{0}$ obtained by applying the differential
operator $\Psi$ to $g$.  

From the general theory of 
linear ordinary differential equations, we have the following basic
inequalities (see e.g. \cite[\S 3.32]{Ince}).

\begin{proposition}
\label{prop:order}
For any non-zero $\Psi  \in \Dalg$, 
\[
    \dim \pker(\Psi ) \leq \dim \ker(\Psi ) \leq \ord(\Psi )
\,.
\]
\end{proposition}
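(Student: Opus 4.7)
The plan is to prove the two inequalities separately. The first, $\dim \pker(\Psi) \leq \dim \ker(\Psi)$, is immediate from the definition $\pker(\Psi) = \ker(\Psi) \cap \CC[u]$, which exhibits $\pker(\Psi)$ as a linear subspace of $\ker(\Psi)$. The second, $\dim \ker(\Psi) \leq \ord(\Psi)$, is the classical fact that an order $m$ linear ODE has at most an $m$-dimensional solution space at a regular point; to keep the exposition self-contained I would rederive it by hand rather than simply cite \cite{Ince}.

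Write $\Psi = \sum_{j=0}^m \psi_j(u)\du^j$ with $\psi_m \neq 0$ and $m = \ord(\Psi)$. Only finitely many points of $\CC$ are zeros of $\psi_m$ or poles of some $\psi_j$, so I can choose $u_0 \in \CC$ at which $\psi_m(u_0) \neq 0$ and every $\psi_j$ is holomorphic. The strategy is then to construct an injective linear map $\mathrm{ev} : \ker(\Psi) \to \CC^m$ by
\[
\mathrm{ev}(f) \,=\, \bigl(f(u_0),\, f'(u_0),\, \dots,\, f^{(m-1)}(u_0)\bigr),
\]
from which $\dim \ker(\Psi) \leq m$ follows immediately.

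To see that $\mathrm{ev}$ is well defined, I must check that every $f \in \ker(\Psi)$ is holomorphic at $u_0$. If $f$ had a pole of order $k \geq 1$ at $u_0$, then in the identity $\psi_m f^{(m)} + \sum_{j < m} \psi_j f^{(j)} = 0$ the leading summand $\psi_m f^{(m)}$ would have a pole of order exactly $k + m$ at $u_0$, whereas each other summand has pole order at most $k + j < k + m$; the orders cannot cancel, contradicting $\Psi f = 0$. For injectivity, suppose $\mathrm{ev}(f) = 0$. Solving $\Psi f = 0$ for $f^{(m)}$ (valid since $\psi_m(u_0) \neq 0$) gives $f^{(m)}(u_0) = 0$; differentiating $\Psi f = 0$ repeatedly and iterating this argument yields $f^{(j)}(u_0) = 0$ for every $j \geq 0$, so the Taylor expansion of $f$ at $u_0$ vanishes, forcing the rational function $f$ to be identically zero.

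The main point requiring care is the holomorphy argument, which relies on the clean matching of pole orders in the leading term of $\Psi f$; this is precisely why $u_0$ must be chosen to avoid both zeros of $\psi_m$ and poles of the lower order coefficients. Beyond this I do not anticipate any substantive obstacle.
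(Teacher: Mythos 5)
Your argument is correct. The paper itself does not prove this proposition; it states it as a standard fact from the theory of linear ODEs and simply cites Ince, \S 3.32. Your proof fills in that gap along the standard lines: choose a point $u_0$ where $\psi_m(u_0)\neq 0$ and every $\psi_j$ is holomorphic, show that any $f\in\ker(\Psi)$ is forced to be holomorphic at $u_0$ by comparing pole orders of the summands in $\sum_j \psi_j f^{(j)} = 0$, and observe that the evaluation map $f\mapsto\bigl(f(u_0),\dots,f^{(m-1)}(u_0)\bigr)$ into $\CC^m$ is injective because the equation and its derivatives determine $f^{(j)}(u_0)$ for all $j\geq m$ recursively, and a rational function with all Taylor coefficients vanishing at a point is identically zero. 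The pole-order step is indeed the one place that requires genuine care, since elements of $\ker(\Psi)$ are a priori rational rather than polynomial, and you handle it correctly. The only difference from the paper is a trade-off of self-containedness against brevity: the paper outsources the result to a classical reference, whereas your version derives it from scratch with an elementary argument.
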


Let $V \subset \CC(u)$ be a finite dimensional $\CC$-linear subspace 
of $\CC(u)$.   Choose any basis $(f_1, \dots, f_m)$ for $V$.
The \defn{fundamental differential operator} of $V$ 
is the monic operator $D_V \in \Dalg$, defined by the determinantal formula
\[
  D_V = 
    \frac{1}{\Wr(f_1, \dots, f_m)}
    \begin{vmatrix}
    f_1(u) & f_1'(u) & f_1''(u) & \dots & f_1^{(m)}(u) \\
    f_2(u) & f_2'(u) & f_2''(u) & \dots & f_2^{(m)}(u) \\
    \vdots & \vdots & \vdots &  & \vdots \\
    f_m(u) & f_m'(u) & f_m''(u) & \dots & f_m^{(m)}(u) \\
    1 & \du & \du^2  & \dots & \du^m \\
    \end{vmatrix}
\,.
\]
This definition is independent of the choice of basis.
Here, we use the convention that the determinant of a $k \times k$ matrix 
$A$ with non-commuting entries is defined to be the ``row-expansion''
\[
  |A| = \sum_{\sigma \in \symgp_k} \sgn(\sigma) 
  A_{1,\sigma(1)} A_{2,\sigma(2)} \dots A_{k, \sigma(k)}
\,.
\]
Equivalently, viewing as $D_V$ a differential operator $\CC(u) \to \CC(u)$, 
we have
\[
   \Dcoeff{D_V g}{0} = \frac{\Wr(f_1, \dots, f_m, g)}{\Wr(f_1, \dots, f_m)}
\,.
\]
The numerator is zero if and only if $f_1, \dots, f_m, g$ 
are linearly dependent, 
i.e.  if and only if $g \in V$.  Hence we see that $\ker(D_V) = V$.

Not every monic operator in $\Dalg$ is a fundamental differential 
operator.  We have the following elementary characterization.

\begin{proposition}
\label{prop:FDO}
Suppose $\Psi  \in \Dalg$ is a monic operator of order $m$.
\begin{enumerate}[(i)]
\item
$\Psi  = D_V$ for some finite dimensional $V \subset \CC(u)$ if and only if 
$\dim \ker(\Psi ) = m$.
\item
$\Psi  = D_V$ for some finite dimensional $V \subset \CC[u]$ if and only if 
$\dim \pker(\Psi ) = m$.
\end{enumerate}
In either case, if $\Psi  = D_V$, then $\Dcoeff{\Psi}{m-1} 
= - \frac{\Wr_V'}{\Wr_V}$.
\end{proposition}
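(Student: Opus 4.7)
The plan is to reduce parts (i) and (ii) to a comparison argument with $D_V$ via Proposition~\ref{prop:order}, and then to read off the subleading coefficient of $D_V$ directly from its determinantal definition.

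For (i) the forward direction is immediate, since $\ker(D_V) = V$ was already noted, so $\dim\ker(D_V) = \dim V = m$. Conversely, given a monic $\Psi$ of order $m$ with $\dim\ker(\Psi) = m$, I would set $V := \ker(\Psi)$; then $D_V$ is a well-defined monic operator of order $m$ with $\ker(D_V) = V$. Since $\Psi - D_V$ has order at most $m-1$ and its kernel contains the $m$-dimensional space $V$, Proposition~\ref{prop:order} forces $\Psi - D_V = 0$. Part (ii) will follow from (i): the forward direction uses that $V \subset \CC[u]$ together with $\pker(D_V) \subseteq \ker(D_V) = V$ to give $\pker(D_V) = V$, and the converse uses the sandwich $m = \dim\pker(\Psi) \leq \dim\ker(\Psi) \leq \ord(\Psi) = m$ from Proposition~\ref{prop:order} to conclude first that $\ker(\Psi) = \pker(\Psi) \subset \CC[u]$, and then to apply (i).

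For the final claim, I would expand the determinant defining $D_V$ along its bottom row, keeping each $\du^j$ on the right of the corresponding minor as dictated by the row-expansion convention. This yields
\[
  D_V \;=\; \sum_{j=0}^m (-1)^{m+j}\, \frac{W_j}{W_m}\, \du^j,
\]
where $W_j$ is the $m \times m$ minor of $\bigl(f_i^{(k)}\bigr)_{1 \le i \le m,\, 0 \le k \le m}$ obtained by deleting the column indexed by $k=j$. In particular $W_m = \Wr(f_1,\dots,f_m)$, confirming monicity, and $\Dcoeff{D_V}{m-1} = -W_{m-1}/W_m$. A column-by-column differentiation of $\Wr(f_1,\dots,f_m)$ then shows that $W_{m-1} = \Wr(f_1,\dots,f_m)'$, since differentiating any non-final column of the Wronskian matrix creates a repeated column and kills that term. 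As $\Wr(f_1,\dots,f_m)$ and $\Wr_V$ agree up to a nonzero scalar, the ratio $\Wr'/\Wr$ coincides with $\Wr_V'/\Wr_V$, finishing the identification.

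The main obstacle I anticipate is simply the bookkeeping in the determinant expansion: one must verify that the row-expansion convention places each $\du^j$ uniformly to the right of its cofactor, so that the resulting expression is already in canonical form without any hidden commutators, and one must track the signs $(-1)^{(m+1)+(j+1)}$ from the cofactor rule. The rest is routine once this setup is correct.
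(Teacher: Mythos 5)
Your proposal is correct and follows essentially the same route as the paper: part (i) via the comparison argument $\dim\ker(D_V-\Psi)>\ord(D_V-\Psi)$ using Proposition~\ref{prop:order}, part (ii) by reducing to (i) (where the paper simply says ``similar argument''), and the final claim by expanding the determinantal formula for $D_V$ along the bottom row. The only difference is that you spell out the computation of $\Dcoeff{D_V}{m-1}$ --- including the observation that the row-expansion convention puts $\du^j$ to the right of its cofactor and that $W_{m-1}=\Wr(f_1,\dots,f_m)'$ --- which the paper dismisses as a ``straightforward calculation.''
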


\begin{proof}
If $\Psi = D_V$ then $\dim \ker(\Psi) = \ord(\Psi) = m$.  
Conversely, if $\ker(\Psi) = V$,
and $\dim V = m$, then $\Psi$ and $D_V$ are both monic differential
operators of order $m$, with kernel $V$.  Therefore $V \subseteq 
\ker (D_V -\Psi)$, so $\dim \ker (D_V -\Psi) > \ord (D_V -\Psi)$; by
Proposition~\ref{prop:order}, this 
is only possible if $D_V -\Psi = 0$.  
This proves (i) and a similar argument proves (ii).
The final statement is a straightforward calculation, and follows 
directly from the definitions of $D_V$ and $\Wr_V$.
\end{proof}

\begin{corollary}
\label{cor:invwrsols}
Let $g(u) \in \CC(u)$ be a monic rational function, and
let $\Psi  \in \Dalg$ be a monic operator of order $m$. 
\begin{enumerate}
\item[(i)]
If $\dim \ker(\Psi) = m$, and
$\Dcoeff{\Psi}{m-1} = - \frac{g'(u)}{g(u)}$, then $\ker(\Psi)$ 
is a solution to 
the (rational) inverse Wronskian problem for $g(u)$.
\item[(ii)] If $g(u) \in \CC[u]$, $\dim \pker(\Psi) = m$, and
$\Dcoeff{\Psi}{m-1} = - \frac{g'(u)}{g(u)}$, then $\pker(\Psi)$ 
is a solution to 
the (polynomial) inverse Wronskian problem for $g(u)$.
\end{enumerate}
\end{corollary}

We now describe the relationship between solutions of
the (polynomial) inverse Wronskian problem of different dimensions.

\begin{proposition}
\label{prop:dimensions}
Let $w(u) = (u+z_1)\dotsm (u+z_n)$ be a polynomial of degree $n$.
\begin{enumerate}
\item[(i)]
$V \subset \CC[u]$ 
is an $m$-dimensional solution to the inverse Wronskian problem
for $w(u)$ if and only if $\pker(D_V \du)$ is an $(m+1)$-dimensional 
solution.

\item[(ii)]
If $m \geq n$, then every $(m+1)$-dimensional solution is of the form
$\pker(D_{V} \du)$, for some $m$-dimensional solution $V$.
\end{enumerate}
\end{proposition}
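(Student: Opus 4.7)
The plan is to prove the two parts separately, in each case using Proposition~\ref{prop:FDO} to convert Wronskian statements into statements about leading coefficients of fundamental differential operators.

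For (i), given an $m$-dimensional solution $V \subset \CC[u]$, I would examine the monic operator $D_V \du$ of order $m+1$. Since $(D_V \du)(g) = D_V(g')$, its polynomial kernel is $\pker(D_V \du) = \{g \in \CC[u] : g' \in V\}$, which has dimension $m+1$: take antiderivatives of a basis of $V$ (they are polynomials), and adjoin the constant $1$. By Proposition~\ref{prop:FDO}(ii), $D_V \du = D_{\pker(D_V \du)}$, so it remains to identify the Wronskian. The leading-coefficient computation is
\[
   \Dcoeff{D_V \du}{m} \;=\; \Dcoeff{D_V}{m-1} \;=\; -\Wr_V'/\Wr_V \;=\; -w'/w,
\]
and a second application of Proposition~\ref{prop:FDO} gives $\Wr_{\pker(D_V \du)}'/\Wr_{\pker(D_V \du)} = w'/w$. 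Since both Wronskians are monic polynomials sharing a logarithmic derivative, they coincide.

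For (ii), given an $(m+1)$-dimensional solution $\tilde V$ with $m \geq n$, the natural candidate is $V := \du \tilde V = \{f' : f \in \tilde V\}$. The key claim, which is the only place the hypothesis $m \geq n$ enters, is that $\CC \subset \tilde V$. I would argue as follows. By Gaussian elimination on leading coefficients, $\tilde V$ has a basis $f_0, \dots, f_m$ with strictly increasing degrees $d_0 < d_1 < \cdots < d_m$. A standard calculation on the Wronskian determinant gives $\deg \Wr_{\tilde V} = \sum_i d_i - \binom{m+1}{2}$, so $\sum_i d_i = n + \binom{m+1}{2}$. Since $d_i \geq d_0 + i$, this forces $(m+1)d_0 \leq n$, i.e.\ $d_0 \leq n/(m+1) < 1$, so $d_0 = 0$. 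Once $\CC \subset \tilde V$, the differentiation map $\du : \tilde V \to V$ has kernel exactly $\CC$, so $\dim V = m$ and $V \subset \CC[u]$. The same leading-coefficient computation as in (i) gives $\Wr_V = w$. Finally, $\pker(D_V \du) = \tilde V$: the inclusion $\tilde V \subseteq \pker(D_V\du)$ is immediate, and conversely if $g' \in V = \du \tilde V$ then $g - f \in \CC$ for some $f \in \tilde V$, hence $g \in \tilde V + \CC = \tilde V$.

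The only nontrivial step is the degree-sequence argument in (ii): it is the sole place where $m \geq n$ is used, and it is what ensures that $V = \du\tilde V$ has the correct dimension. Everything else reduces to routine applications of Proposition~\ref{prop:FDO} and bookkeeping of leading coefficients of differential operators. The hypothesis is sharp in the sense that for $m < n$ one expects $(m+1)$-dimensional solutions whose minimum-degree basis element has positive degree, and these cannot lie in the image of the map $V \mapsto \pker(D_V \du)$.
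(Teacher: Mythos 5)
Your proof is correct and follows essentially the same route as the paper's: part (i) by matching $\Dcoeff{D_V\du}{m}$ with $\Dcoeff{D_V}{m-1}$ via Proposition~\ref{prop:FDO}, and part (ii) by a degree count on the Wronskian to show the minimal-degree basis element is constant. The only cosmetic difference is in (ii): the paper concludes from $1 \in V'$ that $D_{V'} = \Psi\du$ and then invokes Proposition~\ref{prop:FDO} to identify $\Psi = D_V$, whereas you construct $V = \du\tilde V$ directly and verify $\pker(D_V\du) = \tilde V$ and $\Wr_V = w$ by hand; these are the same argument read from opposite ends.
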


\begin{proof}
We have $\pker(D_V \du) = \{f(u) \in \CC[u] \mid f'(u) \in V\}$, which
is $(m+1)$-dimensional, and $\Dcoeff{D_V\du}{m} = \Dcoeff{D_V}{m-1}$.
Statement therefore (i) follows from Proposition~\ref{prop:FDO}.

For (ii), suppose $V' \subset \CC[u]$ is an $(m+1)$-dimensional solution.
Let $(f_1, \dots, f_{m+1})$ be a basis for $V'$.  We may assume that
$\deg(f_1) > \dots > \deg(f_{m+1})$.  Then 
$\Wr(f_1, \dots, f_{m+1})$ has degree 
$\sum_{i=1}^{m+1} \deg(f_i) - \frac{m(m+1)}{2} \geq (m+1) \deg(f_{m+1})$.
Therefore when $m \geq n$, $f_{m+1}$ must be a constant.
It follows that
$D_{V'} = \Psi \du$ for some $\Psi$. Now
Proposition~\ref{prop:FDO} implies that $\Psi = D_V$ for some $m$-dimensional
solution $V$.
\end{proof}

Using Proposition \ref{prop:dimensions},
the inverse Wronskian problem for $w(u)$ reduces to the 
case where $\dim V = \deg(w) = n$.  If we can we find
all $n$-dimensional solutions, then we obtain all 
$m$-dimensional solutions, $m=1,2,3,\dots$, as follows:
for $m < n$, take all subspaces $V \subset \CC[u]$ 
such that $\pker (D_V \du^{n-m})$ is an $n$-dimensional solution;
for $m > n$, take all subspaces $\pker(D_V \du^{m-n}) \subset \CC[u]$ such
that $V$ is an $n$-dimensional solution.  In the remaining sections, 
our discussion of the inverse Wronskian problem will focus exclusively
on the case where $\dim V = \deg (w)$.

%
%
\section{The Bethe subalgebra of $\CC[\Sn]$}
\label{sec:bethe}

Let $\Sn$ denote the symmetric group of permutations of
$[n] =\{1, \dots, n\}$, and let
$\CC[\Sn]$ denote the group algebra of $\Sn$.
Write $\idSn$ for the identity element of $\Sn$.

As before, let
$w(u) = (u+z_1)\dotsm (u+z_n) \in \CC[u]$, where $z_1, \dots, z_n$ are
complex numbers.  
For a subset $X \subseteq [n]$, 
write $z_X = \prod_{i \in X} z_i$.
Let $\symgp_X = \{\sigma \in \Sn \mid \sigma(i) = i \text{ for }i \notin X\}$
be the subgroup $\Sn$ which permutes only the elements of $X$.  
Define elements $\alpha^\pm_X \in \CC[\Sn]$, as follows:
\[
   \alpha^+_X = \sum_{\sigma \in \symgp_{X}} \sigma
\qquad
\qquad
   \alpha^-_X = \sum_{\sigma \in \symgp_{X}} \sgn(\sigma) \sigma
\,.
\]
For $k,l \leq n$, let
\[
    \beta_{k,l}^\pm = \sum_{\substack{|X| = k, |Y| = l \\ X \cap Y = \emptyset}}
             \alpha_X^\pm z_Y
\,.
\]
In particular, note that
\begin{align*}
\beta^\pm_{0,n} &= z_1 z_2 \dotsm z_n \cdot \idSn 
&
\beta^\pm_{1,n-1} &= z_1 z_2 \dotsm z_n \sum_{i=1}^n \frac{1}{z_i}  \cdot \idSn 
\,.
\end{align*}

Define $\bethe^-(z_1, \dots, z_n)$ (resp. $\bethe^+(z_1, \dots, z_n)$)
to be the subalgebra of $\CC[\Sn]$ generated by the 
group algebra elements $\beta_{k,l}^-$ (resp. $\beta^+_{k,l}$), $k,l \leq n$.  
Let $\bethe(z_1, \dots, z_n)$ denote the algebra generated
by both the $\beta_{k,l}^-$ and $\beta_{k,l}^+$ operators.

\begin{theorem}
\label{thm:commute}
The elements $\beta_{k,l}^{\pm}$ commute pairwise.  
Furthermore, 
\[
 \bethe^-(z_1, \dots, z_n) = \bethe^+(z_1, \dots, z_n)
 = \bethe(z_1, \dots, z_n)
\,.
\]
\end{theorem}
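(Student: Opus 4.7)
The plan is to prove Theorem~\ref{thm:commute} by first establishing pairwise commutativity of the $\beta^-_{k,l}$'s by a direct combinatorial computation, transferring this to the $\beta^+_{k,l}$'s through a sign automorphism of $\CC[\Sn]$, and then using Theorem~\ref{thm:main} to show that the two subalgebras coincide. Mixed commutativity $[\beta^-_{k,l}, \beta^+_{k',l'}] = 0$ then follows automatically once the equality $\bethe^+(z_1,\dots,z_n) = \bethe^-(z_1,\dots,z_n)$ is in hand.

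For commutativity within $\bethe^-(z_1,\dots,z_n)$, I would expand $[\beta^-_{k,l}, \beta^-_{k',l'}]$ as a sum over quadruples of subsets $(X, Y, X', Y')$ with $X \cap Y = \emptyset$ and $X' \cap Y' = \emptyset$, weighted by $z_Y z_{Y'}$. Contributions from pairs with $X \cap X' = \emptyset$ drop out immediately, since $\alpha^-_X$ and $\alpha^-_{X'}$ then commute in $\CC[\Sn]$. The combinatorial heart of the argument is to produce a sign-reversing involution on the remaining quadruples (those with $X \cap X' \neq \emptyset$) which matches each term of $\beta^-_{k,l}\beta^-_{k',l'}$ with a cancelling term of $\beta^-_{k',l'}\beta^-_{k,l}$, using the explicit multiplication rules for $\alpha^-_X \alpha^-_{X'}$. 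Commutativity of $\bethe^+(z_1,\dots,z_n)$ is then immediate via the sign automorphism $\epsilon\colon \CC[\Sn] \to \CC[\Sn]$, $\sigma \mapsto \sgn(\sigma)\sigma$, which is an algebra automorphism sending $\alpha^+_X$ to $\alpha^-_X$ and therefore $\beta^+_{k,l}$ to $\beta^-_{k,l}$, inducing an isomorphism $\bethe^+(z_1,\dots,z_n) \cong \bethe^-(z_1,\dots,z_n)$ of abstract algebras.

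For the equality $\bethe^+(z_1,\dots,z_n) = \bethe^-(z_1,\dots,z_n)$, I would exploit Theorem~\ref{thm:main}. Once the coefficients of $\calD^-_n$ are known to commute with one another, $\calD^-_n$ is a monic differential operator of order $n$ over a commutative coefficient ring, and so admits a formal two-sided inverse $(\calD^-_n)^{-1}$ in the associated ring of pseudodifferential operators, whose coefficients are polynomial expressions in those of $\calD^-_n$. Rearranging $\calD^+_n \calD^-_n = \du^{2n}$ to $\calD^+_n = \du^{2n}(\calD^-_n)^{-1}$ and comparing coefficients exhibits each $\beta^+_{k,l}$ as a polynomial in the $\beta^-_{k',l'}$'s, where the explicit $u$-dependence on the right must cancel because the left-hand side is $u$-independent. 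This gives $\bethe^+(z_1,\dots,z_n) \subseteq \bethe^-(z_1,\dots,z_n)$, and applying $\epsilon$ yields the reverse inclusion.

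The main obstacle is the combinatorial step. Products $\alpha^-_X \alpha^-_{X'}$ with overlapping $X, X'$ do not reduce to an antisymmetrizer on any single subset, and the bookkeeping between the subset indices and the weights $z_Y, z_{Y'}$ needs to be handled delicately in order to exhibit the cancellation. The Schubert-theoretic material in Section~\ref{sec:schubert} presumably supplies the structural refinements (for instance, a Schubert-calculus model for $\bethe(z_1,\dots,z_n)$, or information controlling the pseudodifferential calculation in a neighbourhood of the poles $u = -z_i$ of $\tfrac{1}{w(u)}$) that allow the pseudodifferential inversion in Section~\ref{sec:duality} to be turned into an honest identity inside $\CC[\Sn]$.
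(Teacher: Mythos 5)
Your combinatorial plan for commutativity is in the right spirit, but your route to $\bethe^+ = \bethe^-$ via pseudodifferential inversion diverges sharply from the paper and, as described, has a real gap. Let me address both.

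On commutativity: the paper's bijective argument (Proposition~\ref{prop:commutebijection}) is organized around the \emph{mixed} product $\beta^+_{k,l}\beta^-_{k',l'}$ rather than $\beta^-\beta^-$, and it first invokes translation invariance to reduce to the case $l' = n-k'$ (so $Y' = [n]\setminus X'$); that reduction is not a cosmetic choice, as the condition $\pi \in \symgp_{X'}$ in the proof relies on $X' \supseteq [n]\setminus Z$. You omit this reduction entirely. The resulting bijection $\rho$ is constructed via two nontrivial lemmas (\ref{lem:breakcycles} and \ref{lem:reflect}) that carefully break cycles and build an involution reversing the part of $\sigma\sigma'$ supported on $Y\cup Y'$; your proposal stops at ``produce a sign-reversing involution,'' which is precisely the hard part. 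You do, however, correctly identify $\star$ as transporting $\beta^+$ to $\beta^-$, and the observation that $\alpha^-_X, \alpha^-_{X'}$ commute when $X\cap X' = \emptyset$ is sound. One can indeed derive all other sign combinations from one, as the paper itself notes, so proving $\beta^-\beta^-$ instead of $\beta^+\beta^-$ is not a defect per se.

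On $\bethe^+ = \bethe^-$: your pseudodifferential inversion idea is genuinely different from the paper's, and here is where the gap lies. Writing $\calD^+_n = \du^{2n}(\calD^-_n)^{-1}$ is a valid rearrangement of Theorem~\ref{thm:main} in the ring of formal pseudodifferential operators, and the coefficients of $(\calD^-_n)^{-1}$ are universal differential-polynomial expressions in the coefficients of $\calD^-_n$. But those coefficients are $u$-dependent rational functions $\frac{(-1)^k\beta^-_{k,n-k}(u)}{w(u)}$, and their $u$-derivatives accumulate powers of $w(u)^{-1}$. Comparing with $\calD^+_n = \left(\sum_k \du^{n-k}\beta^+_{k,n-k}(u)\right)\frac{1}{w(u)}$ in standard form yields, for each $j$, an identity of $\CC[\Sn]$-valued rational functions of $u$. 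You assert that ``the explicit $u$-dependence on the right must cancel,'' and it must, but extracting the constants $\beta^+_{k,l} \in \CC[\Sn]$ as $z$-independent, $\QQ$-coefficient polynomials in the constants $\beta^-_{k',l'}$ from such rational-function identities requires a separate argument that you do not supply. That is exactly what the paper's Theorem~\ref{thm:polyfunctions} provides, via Lemma~\ref{lem:polysolution}, the Schubert-cell parametrization, the maps $\Omega^\lambda, \Upsilon^\lambda$, and the Grassmann-duality result Theorem~\ref{thm:dual} --- a considerable amount of machinery whose role you acknowledge only as something that ``presumably supplies the structural refinements.'' The paper also passes through an intermediate step you skip: for generic $(z_1,\dots,z_n)$, both $\bethe^\pm$ are \emph{maximal} commutative subalgebras (deformations of Gelfand--Tsetlin), which immediately forces equality generically; the polynomial identity of Theorem~\ref{thm:polyfunctions} is then needed only to extend to arbitrary $(z_1,\dots,z_n)$. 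Without either the maximality argument or the Schubert-theoretic machinery, your pseudodifferential comparison does not close.
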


The proof of Theorem~\ref{thm:commute} is given in Section~\ref{sec:commute}.

The commutative algebra $\bethe(z_1, \dots, z_n)$ is called
is called the \defn{Bethe subalgebra} of $\CC[\Sn]$
of Gaudin type.  Certain properties of this subalgebra depend on the
numbers $z_1, \dots, z_n$.  For example, the dimension of 
$\bethe(z_1, \dots, z_n)$
depends on $z_1, \dots, z_n$; in some cases $\bethe(z_1, \dots, z_n)$
is semisimple, but not always.
However, in all cases it contains $Z(\CC[\Sn])$, 
the centre of $\CC[\Sn]$.

\begin{theorem}
\label{thm:centre}
The elements $\beta^-_{0,0}, \beta^-_{1,0}, \dots, \beta^-_{n,0}$
generate $Z(\CC[\Sn])$.
\end{theorem}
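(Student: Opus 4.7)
Centrality of each $\beta^-_{k,0}$ is immediate: conjugation by $\tau \in \Sn$ sends $\alpha^-_X$ to $\alpha^-_{\tau(X)}$, so the sum $\beta^-_{k,0} = \sum_{|X|=k}\alpha^-_X$ is $\Sn$-invariant and lies in $Z(\CC[\Sn])$.

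The generation claim I would prove via a standard filtration argument. Expanding in the conjugacy-class-sum basis (by swapping the summation order in the definition and counting subsets $X$ of size $k$ containing $\mathrm{supp}(\sigma)$) gives
\[
\beta^-_{k,0} = \sum_{\lambda \vdash n}\sgn(\lambda)\binom{m_1(\lambda)}{n-k}C_\lambda,
\]
where $m_1(\lambda)$ is the number of parts of $\lambda$ equal to $1$ and $C_\lambda$ is the class sum of cycle type $\lambda$. Filter $Z(\CC[\Sn])$ by $\mathcal{F}_j := \vspan\{C_\lambda : n-m_1(\lambda)\leq j\}$. Then $\beta^-_{k,0}\in\mathcal{F}_k$, and its image in $\mathcal{F}_k/\mathcal{F}_{k-1}$ equals $\sum_\mu\sgn(\mu)\,C_{(\mu,1^{n-k})}$, summed over partitions $\mu$ of $k$ with all parts $\geq 2$.

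Let $A$ be the subalgebra generated by $\{\beta^-_{k,0}\}_{k=0}^n$. I would prove $\mathcal{F}_k\subseteq A$ by induction on $k$; the base case $\mathcal{F}_0=\CC\beta^-_{0,0}$ is clear. For the inductive step, a direct calculation shows that for any partition $\mu=(\mu_1,\dots,\mu_r)\vdash k$ with all parts $\geq 2$, the product $C_{(\mu_1,1^{n-\mu_1})}\cdots C_{(\mu_r,1^{n-\mu_r})}$ lies in $\mathcal{F}_k$ and equals $\bigl(\prod_i m_i(\mu)!\bigr)\cdot C_{(\mu,1^{n-k})}$ modulo $\mathcal{F}_{k-1}$, where $m_i(\mu)$ counts the parts of $\mu$ equal to $i$. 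When $\mu\neq(k)$, each factor $C_{(\mu_j,1^{n-\mu_j})}$ has $\mu_j<k$ and is in $A$ by induction. Thus every $C_{(\mu,1^{n-k})}$ with $\mu\neq(k)$ is congruent modulo $\mathcal{F}_{k-1}$ to an element of $A$; combining with $\beta^-_{k,0}$ itself (which then pins down the missing $C_{(k,1^{n-k})}$), all class sums $C_\lambda$ with $n-m_1(\lambda)=k$ lie in $A$, completing the induction and yielding $A=Z(\CC[\Sn])$.

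The main obstacle is the leading-term computation $\prod_i C_{(\mu_i,1^{n-\mu_i})}\equiv\bigl(\prod_i m_i(\mu)!\bigr)\cdot C_{(\mu,1^{n-k})}\pmod{\mathcal{F}_{k-1}}$. This requires checking that (i) products of elements with overlapping supports land in $\mathcal{F}_{k-1}$, which is immediate from the support bound, and (ii) the disjoint-support contributions are counted correctly, which reduces to counting the orderings of the cycles of a fixed cycle-type-$\mu$ permutation that are compatible with the sequence $(\mu_1,\ldots,\mu_r)$. Although elementary, these leading-term calculations together with the careful staging of the induction form the bulk of the technical work.
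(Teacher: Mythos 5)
Your proof is correct, and it takes a genuinely different route from the paper's. The paper's argument proceeds by representation theory: it reduces the claim to showing that the tuples of scalars $(b^\lambda_0, \dots, b^\lambda_n)$ by which $\beta^-_{n-k,0}$ acts on the irreducibles $M^\lambda$ are pairwise distinct, and establishes this as a byproduct of Theorem~\ref{thm:schubert} --- the $b^\lambda_k$'s are proportional to the indicial coefficients $c^\lambda_k$ of the Fuchsian operator $\calD^-_E$, and those coefficients determine the exponents at infinity, hence $\lambda$, via Proposition~\ref{prop:indicial}. Your argument instead works entirely inside $Z(\CC[\Sn])$ combinatorially: the class-sum expansion $\beta^-_{k,0} = \sum_{\lambda}\sgn(\lambda)\binom{m_1(\lambda)}{n-k}C_\lambda$ is correct (the binomial counts $k$-subsets containing $\mathrm{supp}(\sigma)$), the support filtration $\mathcal{F}_j$ is well-defined, and the leading-term identity for $\prod_j C_{(\mu_j,1^{n-\mu_j})}$ holds because products of cycles with overlapping supports drop into $\mathcal{F}_{k-1}$, while disjoint-support products yield each $(\mu,1^{n-k})$-permutation exactly $\prod_i m_i(\mu)!$ times. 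The induction then cleanly picks up $C_{(\mu,1^{n-k})}$ for $\mu\neq(k)$ from products, and $C_{(k,1^{n-k})}$ from $\beta^-_{k,0}$ itself. What your approach buys is self-containedness: it needs no representation theory beyond the class-sum basis and no differential-operator machinery, and it is bijective in spirit, which fits the paper's Section~\ref{sec:commute}. What the paper's approach buys is economy given its context: the needed scalar computation is already done in the proof of Theorem~\ref{thm:schubert}, so Theorem~\ref{thm:centre} falls out in two lines, and the same calculation is simultaneously used to identify the Schubert cell of $V_E$. Either proof would serve; yours is the more elementary and could stand alone as an independent lemma.
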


If $\gamma \in \bethe(z_1, \dots, z_n)$, let $\gamma(t)$ denote the
element obtained from $\gamma$ by the substitution 
$(z_1, \dots, z_n) \mapsto (z_1+t, \dots, z_n+t)$.  For example,
we have
\[
   \beta_{k,n-k}^\pm(t) = \sum_{l=0}^{n-k} \beta_{k,l}^\pm t^{n-k-l}
\,.
\]
From this, it is not hard to see that $\bethe(z_1, \dots, z_n)$
is generated by the elements 
$\beta^\pm_{k,n-k}(t)$, $k =0, \dots, n$, $t \in \CC$.
In particular the Bethe subalgebra is \defn{translation invariant}, i.e.
$\bethe(z_1, \dots, z_n) = \bethe(z_1+t, \dots, z_n+t)$.

Consider the algebra $\Dalg[\Sn] = \Dalg \otimes \CC[\Sn]$.  
For ease of notation, we will implicitly identify
$\Psi \in \Dalg$, with $\Psi \otimes \idSn \in \Dalg[\Sn]$, 
and $\gamma \in \CC[\Sn]$ with
$1 \otimes \gamma \in \Dalg[\Sn]$.   
The elements of $\Dalg[\Sn]$ can
be uniquely written in form $\sum_{\sigma \in \Sn} \Psi_\sigma \sigma$,  
where $\Psi_\sigma \in \Dalg$.  
Note that we have 
$\Psi \gamma = \gamma \Psi$ for all $\Psi \in \Dalg$ and 
$\gamma \in \CC[\Sn]$.  

The operators $\calD^-_n, \calD^+_n \in \Dalg[\Sn]$ which appear 
in Theorem~\ref{thm:main} can now be written more concisely as:
\begin{align*}
  \calD^+_n
          &= 
       \left(\sum_{k=0}^n \du^{n-k} \beta^+_{k,n-k}(u) \right) 
   \frac{1}{w(u)}
\qquad &
  \calD^-_n
          &= \frac{1}{w(u)}
       \left(\sum_{k=0}^n (-1)^{k} \beta^-_{k,n-k}(u) \du^{n-k}\right)
\\
          &= 
      \du^n + \du^{n-1} \tfrac{w'(u)}{w(u)} + \dots + 
        \tfrac{\beta^+_{n,0}}{w(u)}
&
          &= 
         \du^n - \tfrac{w'(u)}{w(u)} \du^{n-1} + \dots + 
        (-1)^n \tfrac{\beta^-_{n,0}}{w(u)}
\,.
\end{align*}

\begin{remark}
The operators $\calD^+_n$ and $\calD^-_n$ are
related by an anti-involution of $\Dalg[\Sn]$. 
If $\omega : \Dalg[\Sn] \to \Dalg[\Sn]$ is the anti-automorphism
defined by $\omega(\du) = -\du$, $\omega(g) = g$, 
$\omega(\sigma) = \sgn(\sigma) \sigma^{-1}$, for all $g(u) \in \CC(u)$, 
$\sigma \in \Sn$, then $\omega(\calD^-_n) = (-1)^n \calD^+_n$.
\end{remark}

Let $\lambda \vdash n$ be a partition, and let $M^\lambda$ denote the 
irreducible $\CC[\Sn]$-module associated to $\lambda$.
An \defn{eigenspace of the Bethe algebra of type $\lambda$}
is a maximal linear subspace $E \subset M^\lambda$, such that each
operator $\gamma \in \bethe(z_1, \dots, z_n)$ acts as a scalar 
$\gamma_E$ on $E$.  In particular, for any eigenspace $E$ of the
Bethe algebra, we obtain scalars $\beta^\pm_{k,l,E} \in \CC$,
polynomials $\beta^\pm_{k,n-k,E}(u) 
= \sum_{l=0}^{n-k} \beta_{k,l,E}^\pm u^{n-k-l} \in \CC[u]$,
and scalar valued differential operators $\calD^-_E, \calD^+_E \in \Dalg$:
\begin{align*}
  \calD^+_E
          &= 
       \left(\sum_{k=0}^n \du^{n-k} \beta^+_{k,n-k,E}(u) \right) 
   \frac{1}{w(u)}
&
  \calD^-_E
          &= \frac{1}{w(u)}
       \left(\sum_{k=0}^n (-1)^{k} \beta^-_{k,n-k,E}(u) \du^{n-k}\right)
\,.
\end{align*}
Note that 
$\Dcoeff{\calD^-_E}{n-1} = - \frac{w'(u)}{w(u)}$, and
$\Dcoeff{\calD^+_E}{n-1} = + \frac{w'(u)}{w(u)}$.
Thus Theorem~\ref{thm:main} and Corollary~\ref{cor:invwrsols}
imply that $\ker (\calD^-_E)$ is a solution
to the inverse Wronskian problem for $w(u)$, and
$\ker (\calD^+_E)$ is a solution
to the inverse Wronskian problem for $\frac{1}{w(u)}$.

\begin{example}
\label{ex:21}
Take $w(u) = u^3-3u$, i.e. $(z_1, z_2, z_3) = (\sqrt{3}, -\sqrt{3}, 0)$.
Consider the $2$-dimensional $\CC[\symgp_3]$-module $M^{21}$, 
in which the elementary transpositions $(1\,2)$ and $(2\,3)$
are represented by the matrices
$[\begin{smallmatrix} 0 & -1 \\ -1 & 0  \end{smallmatrix}]$
and
$[\begin{smallmatrix} 1 & 0 \\ 1 & -1  \end{smallmatrix}]$, respectively%
\footnote{Following the conventions used by \texttt{Sage} \cite{sage}.}.
Then
$\beta^-_{3,0}= \alpha_{\{1,2,3\}}^-$ acts as zero, and 
\[
\beta^-_{2,1}(u) = \alpha_{\{2,3\}}^- (u+z_1) + 
\alpha_{\{1,3\}}^- (u+z_2) + 
\alpha_{\{1,2\}}^- (u+z_3)
\]
is represented by the matrix
\[
     \begin{bmatrix}
      0 & 0 \\
      -1 & 2 
    \end{bmatrix} (u+z_1)
     +
     \begin{bmatrix}
      2 & -1 \\
      0 & 0 
    \end{bmatrix} (u+z_2)
     +
     \begin{bmatrix}
      1 & 1 \\
      1 & 1 
    \end{bmatrix} (u+z_3)
     =
     \begin{bmatrix}
      3u-2\sqrt{3} & \sqrt{3} \\
      -\sqrt{3} & 3u+2\sqrt{3}
    \end{bmatrix} 
\,.
\]
The eigenspaces $\mathcal{B}_3(\sqrt{3}, -\sqrt{3}, 0)$
of type $\lambda = 21$ are therefore the eigenspaces of the
matrix $[\begin{smallmatrix} -2 & 1 \\ -1 & 2 \end{smallmatrix}]$,
which are
   $E_1 = \mathrm{span}\, 
       [\begin{smallmatrix} 1 \\ 2+\sqrt{3} \end{smallmatrix}]$
and
   $E_2 = \mathrm{span}\, 
       [\begin{smallmatrix} 2+\sqrt{3} \\ 1 \end{smallmatrix}]$.
Restricting $\calD^-_n$ to each eigenspace, we obtain
\[
   \calD^-_{E_1} = \du^3 - \tfrac{3u^2-3}{u^3-3u} \du^2 
    + \tfrac{3u+3}{u^3-3u} \du
 \qquad \qquad
   \calD^-_{E_2} = \du^3 - \tfrac{3u^2-3}{u^3-3u} \du^2 
    + \tfrac{3u-3}{u^3-3u} \du
\,.
\]
One can check that
$\ker (\calD^-_{E_1}) = \langle u^4+4u^3, u^2-2u,1 \rangle$
and $\ker (\calD^-_{E_2}) = \langle u^4-4u^3, u^2+2u,1 \rangle$,
which are indeed solutions to the inverse Wronskian problem for $u^3-3u$.
There are two more solutions, which come from the $1$-dimensional
$\CC[\Sn]$-modules $M^3$ and $M^{111}$.
\end{example}

\begin{remark}
Our exposition differs from \cite{MTV-Sn} in the following respect.
In \cite{MTV-Sn}, the Bethe subalgebra of $\CC[\Sn]$ is defined to be 
the algebra generated by the elements $\beta^-_{k,l}$, 
whereas here, we have defined it to be the algebra generated by all elements
$\beta^\pm_{k,l}$.  Theorem~\ref{thm:commute}
asserts that these definitions agree.
The fact that $\bethe^-(z_1, \dots, z_n)$ is commutative is the
content of \cite[Proposition 2.4]{MTV-Sn}, and
one can easily deduce that 
$\bethe^+(z_1, \dots, z_n)$ is also commutative.
However, the fact that $\bethe(z_1, \dots, z_n)$ is commutative does 
not seem to follow directly; we prove this in 
Section~\ref{sec:commute}.  From here we deduce 
Theorem~\ref{thm:invwrsols}, and use it to show
that all three algebras are equal.  This establishes that
$\bethe(z_1, \dots, z_n)$ is generated by the elements $\beta^-_{k,l}$,
and it is also generated by the elements $\beta^+_{k,l}$, as
asserted in the introduction.
Theorem~\ref{thm:centre} is \cite[Proposition 2.1]{MTV-Sn}, and
we include a short proof in Section~\ref{sec:schubert}.
\end{remark}

%
%
\section{Proof of the main identity}
\label{sec:main}

In this section, we prove Theorem~\ref{thm:main}.

For each $a \in [n]$ let $q_a(u) = \frac{1}{u+z_a}$, and for a subset
$X \subseteq [n]$, let
$q_X(u) = \prod_{a \in X} q_a(u)$.  
The operators $\calD^{\pm}_n$ can be rewritten as follows.
\begin{align*}
  \calD^+_n
          &= \sum_{X \subseteq [n]} \du^{n-|X|} q_X(u) \alpha^+_X
&
  \calD^-_n
          &= \sum_{Y \subseteq [n]} (-1)^{|Y|} \alpha^-_Y q_Y(u) \du^{n-|Y|}
\end{align*}

A \defn{supported permutation} $\sigma_Z$ is a permutation $\sigma$,
together with a set $Z \subseteq [n]$, such that $\sigma \in \symgp_Z$.
The set $Z$ is called the support of $\sigma_Z$.
Let $\SP_n$ denote the set of all supported permutations.

Given $\sigma_Z \in \SP_n$ and a subset $A \subseteq Z$,
let $\calF_{\sigma_Z,A}$ be the set of pairs of supported permutations
$(\delta_X,\varepsilon_Y)$,
such that $X \cup Y = Z$, $X \cap Y = A$, and $\delta \varepsilon = \sigma$.
Thus $\calF_{\sigma_Z,A}$ is the set of factorizations
of $\sigma$ into two supported permutations, with some conditions on the
supports.

Consider the differential operators 
$F_{\sigma_Z,A}, F_{\sigma_Z} \in \Dalg$, 
\[
    F_{\sigma_Z,A} = \sum_{(\delta_X,\varepsilon_Y) \in \calF_{\sigma_Z,A}}
                 (-1)^{|Y|}\sgn(\varepsilon) \,
                    \du^{|Y|-|A|} \, 
                    q_A(u) q_Z(u) \,
                    \du^{|X|-|A|}
\]
and
\[
    F_{\sigma_Z} = \sum_{A\subseteq Z} F_{\sigma_Z,A}
\,.
\]
When we expand the product $\calD^+_n \calD^-_n$, and reorganize the
terms, we get the following formula.

\begin{lemma}
\label{lem:expansion}
\[
 \calD^+_n \calD^-_n 
  = 
\sum_{\sigma_Z \in \SP_n}
        \du^{n-|Z|} \,
       F_{\sigma_Z} \sigma \,
       \du^{n-|Z|}
\,.
\]
\end{lemma}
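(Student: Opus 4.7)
The plan is to prove this by a direct expansion of the product $\calD^+_n \calD^-_n$ using the two displayed formulas, followed by a re-indexing of the resulting multi-sum. Starting from
\[
\calD^+_n \calD^-_n = \sum_{X,Y \subseteq [n]} (-1)^{|Y|}\, \du^{n-|X|} q_X(u)\, \alpha^+_X \alpha^-_Y\, q_Y(u) \du^{n-|Y|},
\]
we use that $\CC[\Sn]$ commutes with $\Dalg$ inside $\Dalg[\Sn]$ to slide $q_Y(u)$ past $\alpha^+_X \alpha^-_Y$ and to pull the group algebra elements $\delta\varepsilon$ out to the right past the trailing $\du^{n-|Y|}$. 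Expanding $\alpha^+_X \alpha^-_Y = \sum_{\delta \in \symgp_X,\,\varepsilon \in \symgp_Y} \sgn(\varepsilon)\, \delta\varepsilon$, we then re-parametrize by $\sigma = \delta\varepsilon$, $Z = X \cup Y$, and $A = X \cap Y$, which is exactly the data packaged in a supported permutation $\sigma_Z \in \SP_n$ together with a subset $A \subseteq Z$ and an element $(\delta_X,\varepsilon_Y) \in \calF_{\sigma_Z,A}$.

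The algebraic heart of the proof is the pair of elementary identities $q_X(u) q_Y(u) = q_A(u) q_Z(u)$ (since each $a \in A$ contributes $q_a(u)^2$ on both sides, while each $a \in Z \setminus A$ contributes $q_a(u)$ on both sides) and $|X| + |Y| = |Z| + |A|$. The latter yields $n - |X| = (n - |Z|) + (|Y| - |A|)$ and $n - |Y| = (|X| - |A|) + (n - |Z|)$, which together rewrite the generic summand as
\[
\du^{n-|X|} q_X q_Y \du^{n-|Y|} \;=\; \du^{n-|Z|}\,\du^{|Y|-|A|}\, q_A(u) q_Z(u)\, \du^{|X|-|A|}\,\du^{n-|Z|}.
\]
Once this rewrite is in hand, the signed sum over $(\delta_X,\varepsilon_Y) \in \calF_{\sigma_Z,A}$ is literally the defining expression for $F_{\sigma_Z,A}$, and summing over $A \subseteq Z$ assembles $F_{\sigma_Z}$.

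The main obstacle is purely bookkeeping: one must carefully track the exponents of $\du$ on both sides of the $q_A q_Z$ factor and verify that the four signs and orderings ($(-1)^{|Y|}$, $\sgn(\varepsilon)$, the left-to-right placement of $\alpha^+_X$ before $\alpha^-_Y$, and the asymmetric position of $q$-factors in $\calD^+_n$ versus $\calD^-_n$) all combine correctly after the re-indexing. I also need to check that the re-indexing is a bijection: for every $\sigma_Z \in \SP_n$ and every $A \subseteq Z$, the admissible pairs $(\delta_X,\varepsilon_Y)$ are precisely those with $X \cup Y = Z$, $X \cap Y = A$, $\delta \in \symgp_X$, $\varepsilon \in \symgp_Y$, and $\delta\varepsilon = \sigma$, which is immediate from the definition of $\calF_{\sigma_Z,A}$. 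With these verifications the identity of Lemma~\ref{lem:expansion} follows directly.
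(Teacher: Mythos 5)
Your proposal is correct and follows essentially the same route as the paper: expand the product, expand $\alpha^+_X\alpha^-_Y$ into a sum over pairs $(\delta,\varepsilon)$, re-index by $\sigma = \delta\varepsilon$, $Z = X\cup Y$, $A = X\cap Y$, and use $q_X q_Y = q_A q_Z$ together with $|X|+|Y| = |Z|+|A|$ to match exponents against the definition of $F_{\sigma_Z,A}$. The one point you gloss over slightly is why the re-indexing lands in $\SP_n$ at all, namely that $\sigma = \delta\varepsilon \in \symgp_{X\cup Y} = \symgp_Z$ since $\delta\in\symgp_X$ and $\varepsilon\in\symgp_Y$; this is what makes $\sigma_Z$ a supported permutation, and is worth one line rather than the blanket appeal to "immediate from the definition."
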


\begin{proof}
We have
\begin{align*}
\calD^+_n \calD^-_n 
&= \sum_{X,Y \subseteq [n]} 
          (-1)^{|Y|} 
          \du^{n-|X|} \, \alpha^+_X \alpha^-_Y \, q_X q_Y \, \du^{n-|Y|}
\\
&= \sum_{X,Y \subseteq [n]} 
  \sum_{\delta \in \symgp_X} 
   \sum_{\varepsilon \in \symgp_Y}
          (-1)^{|Y|} \sgn(\varepsilon)
          \du^{n-|X|} \, \delta \varepsilon \, 
            q_{|X \cap Y|} q_{|X \cup Y|} \,
        \du^{n-|Y|}
\\
&= \sum_{A \subseteq Z \subseteq [n]}
   \sum_{\substack{X,Y \subseteq [n]\\X \cup Y = Z \\ X \cap Y = A}}
  \sum_{\sigma \in \symgp_Z}
  \sum_{\substack{\delta \in \symgp_X \\ \varepsilon \in \symgp_Y  
        \\ \delta \varepsilon = \sigma}}
          (-1)^{|Y|} \sgn(\varepsilon)
          \du^{n-|Z|+|Y|-|A|} \, \sigma \, q_A q_Z \, 
        \du^{n-|Z|+|X|-|A|}
\\
&= \sum_{A \subseteq Z \subseteq [n]} 
  \sum_{\sigma \in \symgp_Z}
          \du^{n-|Z|} F_{\sigma_Z,A} \sigma \du^{n-|Z|}
\\
  &= 
   \sum_{\sigma_Z \in \SP_n}
        \du^{n-|Z|} 
       \, F_{\sigma_Z} \sigma\, 
       \du^{n-|Z|}
\,.
\qedhere
\end{align*}
\end{proof}

We now show that almost all of the terms on the right hand side of
Lemma~\ref{lem:expansion} are equal to zero.

\begin{lemma}
\label{lem:bigintersection}
If $|A| \geq 2$, then $F_{\sigma_Z,A} = 0$.
In particular, 
\[
   F_{\sigma_Z} 
      = F_{\sigma_Z,\emptyset}
                    + \sum_{a=1}^n F_{\sigma_Z,\{a\}}
\,.
\]
\end{lemma}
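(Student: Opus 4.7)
The plan is to construct a fixed-point-free, sign-reversing involution on $\calF_{\sigma_Z, A}$ whose orbits pair up the summands defining $F_{\sigma_Z, A}$ with equal and opposite contributions, forcing $F_{\sigma_Z, A} = 0$. The hypothesis $|A| \geq 2$ is used exactly to guarantee that such an involution exists and acts without fixed points.

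First I would observe that the differential-operator factor $\du^{|Y|-|A|}\, q_A(u)\, q_Z(u)\, \du^{|X|-|A|}$ in each summand depends only on $|X|$, $|Y|$, $A$, and $Z$, while the scalar weight $(-1)^{|Y|}\sgn(\varepsilon)$ depends only on $Y$ and on the sign of $\varepsilon$. So I need an involution that fixes the supports $X$ and $Y$ (hence fixes $|X|, |Y|, A, Z$) but flips $\sgn(\varepsilon)$. Picking two distinct elements $a_1, a_2 \in A$ (possible by hypothesis) and setting $\tau = (a_1\, a_2) \in \Sn$, I would define
\[
    \iota \colon \calF_{\sigma_Z, A} \to \calF_{\sigma_Z, A},
    \qquad (\delta_X, \varepsilon_Y) \longmapsto \bigl((\delta\tau)_X,\, (\tau\varepsilon)_Y\bigr).
\]
Since $\{a_1, a_2\} \subseteq A = X \cap Y$, we have $\tau \in \symgp_X \cap \symgp_Y$, so $\delta\tau \in \symgp_X$ and $\tau\varepsilon \in \symgp_Y$; moreover $(\delta\tau)(\tau\varepsilon) = \delta\varepsilon = \sigma$, so $\iota$ really lands in $\calF_{\sigma_Z, A}$. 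Next, $\tau^2 = \identity$ makes $\iota$ an involution, $\tau \neq \identity$ makes it fixed-point-free, and $\sgn(\tau\varepsilon) = -\sgn(\varepsilon)$ makes it sign-reversing. Pairing terms through $\iota$ yields $F_{\sigma_Z, A} = 0$.

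For the ``in particular'' clause I would expand $F_{\sigma_Z} = \sum_{A \subseteq Z} F_{\sigma_Z, A}$, discard the vanishing terms with $|A| \geq 2$, and note that for $a \notin Z$ the set $\calF_{\sigma_Z, \{a\}}$ is empty (since the defining condition $\{a\} = X \cap Y \subseteq Z$ would fail), so $F_{\sigma_Z, \{a\}} = 0$ and adjoining such indices is harmless. Hence $F_{\sigma_Z,\emptyset} + \sum_{a \in Z} F_{\sigma_Z, \{a\}}$ may be rewritten as $F_{\sigma_Z, \emptyset} + \sum_{a=1}^n F_{\sigma_Z, \{a\}}$, as stated.

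I do not foresee any real obstacle: this is a standard sign-reversing involution of a kind that frequently appears when two signed sums over subgroups are multiplied together. The only point that requires care is the well-definedness of $\iota$ as a self-map of $\calF_{\sigma_Z, A}$, and this is precisely where the hypothesis $|A| \geq 2$ enters—it is what produces a nontrivial $\tau \in \symgp_A$, which then automatically lies in the shared stabilizer $\symgp_X \cap \symgp_Y$.
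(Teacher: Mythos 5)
Your proof is correct and is essentially identical to the one in the paper: the paper likewise picks a transposition $\tau \in \symgp_A$ (possible since $|A|\geq 2$) and uses the sign-reversing involution $(\delta_X,\varepsilon_Y)\leftrightarrow((\delta\tau)_X,(\tau\varepsilon)_Y)$ on $\calF_{\sigma_Z,A}$. Your added remark that $F_{\sigma_Z,\{a\}}=0$ for $a\notin Z$ is a reasonable explicit justification for the form of the displayed sum, which the paper leaves implicit.
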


\begin{proof}
As $|A| \geq 2$, there exists a transposition $\tau \in \symgp_A$.  Then 
$(\delta_X, \varepsilon_Y) \leftrightarrow ((\delta\tau)_X, (\tau\varepsilon)_Y)$
defines a sign reversing involution on the set $\calF_{\sigma_Z,A}$,
so $F_{\sigma_Z,A} = 0$.
\end{proof}

To analyze the cases $|A| \leq 1$,
consider the $\CC$-bilinear map $\Phi : \CC[s,t] \times \CC(u) \to \Dalg$, 
defined by $\Phi(s^it^j, g) = \du^i g(u) \du^j$, for $g(u) \in \CC(u)$,
$i,j \geq 0$.
Notice that the operator $F_{\sigma_Z,A}$ is equal to 
$\Phi(p_{\sigma_Z,A}, q_Aq_Z)$, for the polynomial
\begin{equation}
\label{eqn:polyF}
   p_{\sigma_Z,A}(s,t) 
          = \sum_{(\delta_X, \varepsilon_Y) \in \calF_{\sigma_Z,A}}
               (-1)^{|Y|}\sgn(\varepsilon) \, s^{|Y|-|A|} t^{|X|-|A|}
\,.
\end{equation}
The following identity is a reformulation
of the commutation relations~\eqref{eqn:noncommute}.
\begin{proposition} 
\label{prop:commutationidentity}
For any $p(s,t) \in \CC[s,t]$, and $g(u) \in \CC(u)$,
\[
   \Phi\big((s-t)p, g\big) = \Phi(p,g')
\,.
\]
\end{proposition}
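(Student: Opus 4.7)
The plan is to reduce the identity to the single-variable commutation relation \eqref{eqn:noncommute}. Since $\Phi$ is $\CC$-bilinear in its two arguments and the assertion is $\CC$-linear in $p$, it suffices to verify the identity when $p(s,t)$ is a monomial $s^i t^j$ with $i,j \geq 0$. Fixing $g = g(u) \in \CC(u)$, the task then becomes to show
\[
  \du^{i+1} g \, \du^j - \du^i g\, \du^{j+1} = \du^i g' \du^j
\,.
\]

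The left-hand side factors as $\du^i (\du g - g\du) \du^j$, and by \eqref{eqn:noncommute}, the inner bracket is exactly $g'$. This gives the right-hand side, completing the monomial case. Unwinding the reduction, we obtain $\Phi((s-t)p, g) = \Phi(p, g')$ for arbitrary $p \in \CC[s,t]$.

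There is no real obstacle here: the proposition is just the commutation relation~\eqref{eqn:noncommute} repackaged through the map $\Phi$. The only point requiring any care is keeping track of the exponents on $s$ and $t$, and ensuring that the reduction to monomials actually uses bilinearity in the intended variable — $\Phi$ is bilinear in $(p,g)$, so linearity in $p$ alone is precisely what is needed.
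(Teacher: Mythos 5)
Your proof is correct and follows the same route as the paper: reduce by linearity in $p$ to the monomial case $p = s^i t^j$, then factor $\du^i(\du g - g\du)\du^j$ and apply the commutation relation \eqref{eqn:noncommute}. Nothing to add.
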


\begin{proof}
Since $\Phi$ is bilinear, if suffices to prove this for $p = s^it^j$,
in which case we have 
\[
\Phi\big((s-t)p,g\big) = 
 \du^i (\du g(u) - g(u) \du) \du^j = \du^i g'(u) \du^j = \Phi(p,g')
\,.
\qedhere
\]
\end{proof}

\begin{lemma}
\label{lem:allzero}
If $Z$ is non-empty then $F_{\sigma_Z} = 0$.
\end{lemma}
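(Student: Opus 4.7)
The plan is to use Proposition~\ref{prop:commutationidentity} to collapse the sum $F_{\sigma_Z}$ into a single $\Phi$-application whose polynomial input vanishes identically. First I would observe that $F_{\sigma_Z,A} = \Phi(p_{\sigma_Z,A},\,q_A q_Z)$ directly from \eqref{eqn:polyF}, so by Lemma~\ref{lem:bigintersection}
\[
F_{\sigma_Z} \;=\; \Phi(p_{\sigma_Z,\emptyset},\, q_Z) \;+\; \sum_{a\in Z} \Phi(p_{\sigma_Z,\{a\}},\, q_a q_Z).
\]
A direct differentiation gives $q_Z'(u) = -\sum_{a\in Z} q_a(u)\,q_Z(u)$. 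Hence, if one establishes the polynomial identity
\[
p_{\sigma_Z,\emptyset}(s,t) \;=\; (s-t)\,p_{\sigma_Z,\{a\}}(s,t) \qquad \text{for every } a\in Z, \qquad (\star)
\]
then $p_{\sigma_Z,\{a\}}$ is a single polynomial $r(s,t)$ independent of $a$, and Proposition~\ref{prop:commutationidentity} together with bilinearity of $\Phi$ yields $F_{\sigma_Z} = \Phi\bigl(r,\, q_Z' + \sum_{a\in Z} q_a q_Z\bigr) = \Phi(r, 0) = 0$.

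To prove $(\star)$, I would decompose $\sigma$ into its disjoint cycles $C_1,\dots,C_r$ on $Z$ with lengths $\ell_1,\dots,\ell_r$. The essential combinatorial input is that whenever $|A|\leq 1$, any factorization $(\delta_X,\varepsilon_Y)$ with $\delta\varepsilon=\sigma$ and $X\cap Y = A$ must split cycle-by-cycle: each cycle disjoint from $A$ lies entirely in $X$ or entirely in $Y$, and $\delta,\varepsilon$ are correspondingly each supported inside a single cycle. This gives a multiplicative formula $p_{\sigma_Z,A}(s,t) = \prod_i p_{\sigma|_{C_i},\, A\cap C_i}(s,t)$, reducing $(\star)$ to the single-cycle case. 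For a cycle $C$ of length $\ell$ with $A=\emptyset$, the only ordered partitions of $C$ preserved by $\sigma|_C$ are $(C,\emptyset)$ and $(\emptyset,C)$, yielding $p_{\sigma|_C,\emptyset}(s,t) = t^\ell - s^\ell$. I would then show $p_{\sigma|_C,\{a\}}(s,t) = (t^\ell - s^\ell)/(s-t)$ for any $a\in C$. Assembling over all cycles gives $p_{\sigma_Z,\emptyset} = \prod_i (t^{\ell_i}-s^{\ell_i})$ and $p_{\sigma_Z,\{a\}} = \prod_i (t^{\ell_i}-s^{\ell_i})/(s-t)$, from which $(\star)$ is immediate; the assumption $Z\neq\emptyset$ guarantees $r\geq 1$, so the quotient is a genuine polynomial.

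The main obstacle is the single-cycle computation of $p_{\sigma|_C,\{a\}}$, since with $a\in X\cap Y$ both $\delta$ and $\varepsilon$ can act nontrivially at $a$ and factorizations may ``route'' through $a$. I plan to fix $a=c_0$ in the cycle $(c_0,c_1,\dots,c_{\ell-1})$ and observe that if any $c_j \in X\setminus\{c_0\}$, then $\varepsilon$ fixes $c_j$, so $\sigma(c_j) = \delta(c_j) \in X$; thus the $X$-elements other than $c_0$ form a suffix of the list $c_1,\dots,c_{\ell-1}$. Consequently the valid choices are indexed by a cut point $i_0 \in \{1,\dots,\ell\}$ with $c_1,\dots,c_{i_0-1} \in Y$ and $c_{i_0},\dots,c_{\ell-1} \in X$, each determining a unique factorization $\varepsilon = (c_0,c_1,\dots,c_{i_0-1})$, $\delta = (c_0,c_{i_0},\dots,c_{\ell-1})$ with contribution $-s^{i_0-1}t^{\ell-i_0}$. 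Summing over $i_0$ gives $-\sum_{i_0=1}^{\ell} s^{i_0-1}t^{\ell-i_0} = (t^\ell-s^\ell)/(s-t)$, as needed.
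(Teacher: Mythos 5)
Your proof is correct and follows essentially the same route as the paper: you compute $p_{\sigma_Z,\emptyset}$ and $p_{\sigma_Z,\{a\}}$ by analyzing cycle-by-cycle factorizations, establish the factor $(s-t)$ relating them, and then collapse the sum using Proposition~\ref{prop:commutationidentity} together with the identity $q_Z' = -\sum_{a\in Z} q_a q_Z$. The only stylistic difference is that you package the cycle analysis as a multiplicative formula $p_{\sigma_Z,A} = \prod_i p_{\sigma|_{C_i},\,A\cap C_i}$ and reduce to the single-cycle case, whereas the paper carries out the equivalent computation directly with a sum over subsets $K$ of the cycle index set.
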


To simplify some of the notation, we present the argument for the 
case $Z = [n]$; 
the other cases are proved by a conceptually identical argument, 
with $\symgp_Z$ in place of $\Sn$.

\begin{proof}
Suppose $\sigma \in \Sn$ has 
cycles $\gamma_1$, \dots, $\gamma_m$. Let $\nu_i$ be 
the length of cycle $\gamma_i$ and for any subset $K \subseteq [m]$,
write $\nu_K = \sum_{i \in K} \nu_i$.
We will show that the polynomials
$p_{\sigma_{[n]},A}$, $|A| \leq 1$, are related to the following polynomial:
\[
    p_\nu(s,t) = \prod_{i=1}^m (t^{\nu_i} - s^{\nu_i})
\,.
\]

First we compute $p_{\sigma_{[n]},\emptyset}$.
By definition this is a sum over $\calF_{\sigma_{[n]},\emptyset}$,
the set of all factorizations of $\sigma$ into two supported
permutations $\delta_X$ and $\varepsilon_Y$ where $(X,Y)$ is a
partition of $[n]$.  The only way to obtain such a factorization is
to partition the cycles of $\sigma$: we must have
$\varepsilon = \prod_{i \in K} \gamma_i$ and 
$\delta = \prod_{i \notin K} \gamma_i$ for some subset 
$K \subseteq [m]$.  For this factorization, we have
$(-1)^{|Y|}\sgn(\varepsilon) = (-1)^{|K|}$, $|Y| = \nu_K$ and 
$|X| = n -\nu_K$.
Plugging this information into~\eqref{eqn:polyF}, we obtain
\[
p_{\sigma_{[n]},\emptyset}
 = 
\sum_{K \subseteq [m]}
   (-1)^{|K|}
  \,
   s^{\nu_K}
  \,
   t^{n-\nu_K}
= p_\nu
\,.
\]

Next we compute $p_{\sigma_{[n]},\{a\}}$.
Without loss of generality, we may assume
that $a$ appears in the last cycle $\sigma_m$, say
$\sigma_m = (a\, b_1\, b_2 \, b_{\nu_m-1})$.  
Consider the following cycles:
$\pi_i = (a\, b_1\, \dots\, b_{i-1})$ and 
$\pi_i' = (a\, b_i\, \dots\, b_{\nu_m-1})$.  
The factorizations of $\sigma$ into
$\delta_X$ and $\varepsilon_Y$ such that $X \cup Y = [n]$ and 
$X \cap Y = \{a\}$ are of
the form
\[
  \varepsilon = \pi_i \cdot \textstyle \prod_{i \in K} \gamma_i
\qquad
\qquad
  \delta = \pi_i' \cdot \textstyle \prod_{i \notin K} \gamma_i
\,,
\]
where $K \subseteq [m-1]$ and $1 \leq i \leq \nu_m$.
For this factorization, we have
$(-1)^{|Y|}\sgn(\varepsilon) = (-1)^{|K|+1}$,
$|Y| = \nu_K + i$ and $|X| = n-\nu_K-i+1$.
Therefore, using~\eqref{eqn:polyF},
\[
p_{\sigma_{[n]},\{a\}}
 = 
\sum_{K \subseteq [m-1]} \sum_{i=1}^{\nu_m}
(-1)^{|K|+1} s^{\nu_K+i-1} t^{n-\nu_K-i}
 = 
   - \sum_{i=1}^{\nu_m} t^{\nu_m-i} s^{i-1} \cdot
\prod_{i=1}^{m-1} (t^{\nu_i} - s^{\nu_i}) 
 = \frac{p_\nu}{s-t}
\,.
\]

Finally, by Lemma~\ref{lem:bigintersection} and 
Proposition~\ref{prop:commutationidentity},
we have
\begin{align*}
   F_{\sigma_{[n]}} 
   &= F_{\sigma_{[n]},\emptyset}
                    + \sum_{a=1}^n F_{\sigma_{[n]},\{a\}}
\\
   &= \Phi(p_\nu,q_{[n]})
                    + 
     \sum_{a=1}^n \Phi\Big(\frac{p_\nu}{s-t}, q_{a} q_{[n]}\Big)
\\
   &= \Phi\Big(\frac{p_\nu}{s-t},q'_{[n]}\Big)
   + \sum_{a=1}^n \Phi\Big(\frac{p_\nu}{s-t},q_{a} q_{[n]}\Big)
\\
   &= \Phi\Big(\frac{p_\nu}{s-t},q'_{[n]} + \sum_{a=1}^n q_{a} q_{[n]}\Big)
\,.
\end{align*}
The result now follows, because
$q'_{[n]} + \sum_{a=1}^n q_{a} q_{[n]} = 0$.
\end{proof}

\begin{proof}[Proof of Theorem~\ref{thm:main}]
By Lemma~\ref{lem:allzero}, the only $\sigma_Z \in \SP_n$ that
produces a non-zero summand on the right hand side 
of Lemma~\ref{lem:expansion} is the pair
$Z = \emptyset$, $\sigma = \idSn$, which yields
$\du^{n-|Z|} F_{\sigma_Z} \sigma \du^{n-|Z|} = \du^{2n}$.
\end{proof}

%
%
\section{Commutativity}
\label{sec:commute}

In this section, we give a bijective proof of the fact that
the operators $\beta_{k,l}^\pm$ all commute.  The proof is
is essentially identical for all sign combinations, so 
for ease of notation, we'll focus on 
\[
   \beta_{k,l}^+ \beta_{k',l'}^- = \beta_{k',l'}^- \beta_{k',l'}^+  
\,.
\]
We will treat $z_1, \dots, z_n$ as formal indeterminates.
Working formally, it suffices to prove the above identity 
in the case where $l' = n-k'$.  This is enough because 
if we know that 
$\beta_{k,l}^+ \beta_{k',n-k'}^- = \beta_{k',n-k'}^- \beta_{k,l}^+$,
then substituting $z_i \mapsto z_i+t$ we have
$\beta_{k,l}^+(t) \beta_{k',n-k'}^-(t) 
= \beta_{k',n-k'}^-(t) \beta_{k,l}^+(t)$ for all $t \in \CC$, 
from which one can easily deduce the other commutation relations.

Let $B_{k,l}$ denote the set of pairs $(\sigma_X,Y)$ where
$\sigma_X \in \SP_n$ is a supported permutation
and $Y \subseteq [n] \setminus X$, with
$|X| = k$, $|Y| = l$.
Then
\[
    \beta_{k,l}^+ = \sum_{(\sigma_X, Y) \in B_{k,l}} \sigma z_Y
\qquad
\qquad
    \beta_{k',l'}^- = \sum_{(\sigma'_{X'}, Y') \in B_{k',l'}} 
    \sgn(\sigma') \sigma' z_{Y'}
\,.
\]
and $\beta_{k,l}^+ \beta_{k',n-k'}^- = \beta_{k',n-k'}^- \beta_{k,l}^+$,
is the statement
\begin{equation}
\label{eqn:commute}
 \sum_{\substack{(\sigma_X,Y\,;\, \sigma'_{X'},Y') \\
\in ~ B_{k,l}\times B_{k',n-k'} }}
   \sgn(\sigma') \, \sigma \sigma'\, z_Yz_{Y'}
 = 
 \sum_{
\substack{
(\bar \sigma'_{\bar X'}, \bar Y' \,;\, \bar\sigma_{\bar X},\bar Y) \\
\in ~B_{k',n-k'} \times B_{k,l}}}
   \sgn(\bar \sigma') \, \bar \sigma' \bar \sigma\, z_{\bar Y}z_{\bar Y'}
\,.
\end{equation}

Define a preorder $\preceq$ on $\Sn$ as follows.  For $\pi, \tau \in \Sn$,
we'll say $\pi \preceq \tau$ if every fixed point of $\tau$ is a fixed
point of $\pi$.  The following two lemmas are straightforward.

\begin{lemma}
\label{lem:breakcycles}
Let $Z \subseteq [n]$. 
For every $\tau \in \Sn$ there exists permutation 
$\pi \in \symgp_{[n] \setminus Z}$ such that $\pi \preceq \tau$ and
every cycle of
$\pi \tau$ contains at most one element of $[n] \setminus Z$.
\end{lemma}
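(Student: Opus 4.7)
The plan is to build $\pi$ cycle by cycle from the cycle decomposition of $\tau$. Writing $W = [n] \setminus Z$, I would treat each cycle $C$ of $\tau$ independently: list the $W$-elements of $C$ in the order $w_1, w_2, \dots, w_s$ in which they appear along $C$. If $s \geq 2$, define $\pi_C$ to be the cyclic permutation $(w_s\, w_{s-1}\, \cdots\, w_1)$, which sends $w_i \mapsto w_{i-1}$ with indices modulo $s$; if $s \leq 1$, let $\pi_C$ be the identity. The supports of the $\pi_C$ are disjoint subsets of $W$, so the product $\pi = \prod_C \pi_C$ is a well-defined permutation.

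The first two required properties are then immediate. Every element moved by $\pi$ lies in $W$, so $\pi \in \symgp_{[n] \setminus Z}$. And any fixed point of $\tau$ is either in $Z$ (hence automatically fixed by $\pi$) or is the unique $W$-element of a singleton cycle of $\tau$, in which case $s = 1$ and $\pi_C = \mathrm{id}$. Hence $\pi \preceq \tau$.

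The substantive step is to verify the cycle condition for $\pi\tau$. Fix a cycle $C$ of $\tau$ with $W$-elements $w_1, \dots, w_s$. Starting from $w_i$ and iterating $\pi\tau$, the map $\tau$ advances along $C$ while $\pi$ acts as the identity on the intervening $Z$-elements, so the orbit traces out the arc from $w_i$ through these $Z$-elements until it reaches $w_{i+1}$; at that point $\pi$ sends $w_{i+1}$ back to $w_i$, closing a cycle of $\pi\tau$ whose unique $W$-element is $w_i$. Cycles of $\tau$ containing no $W$-elements are preserved verbatim by $\pi\tau$, and singleton cycles remain singletons. In particular, every cycle of $\pi\tau$ contains at most one $W$-element, as required.

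I do not foresee a real obstacle; everything reduces to a careful trace of $\pi\tau$ on one cycle of $\tau$ at a time. The only point requiring a moment of thought is why this specific cyclic permutation $\pi_C$, rather than some other rearrangement of the $w_i$, is the one that cuts $C$ into the desired arcs — and this falls out of the analysis above, since the arcs we want begin at each $w_i$ and end just before $w_{i+1}$, so $\pi$ must undo $\tau$'s final step by sending $w_{i+1}$ back to $w_i$.
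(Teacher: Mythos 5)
Your proof is correct, and it fills in a detail the paper omits: the paper simply states that Lemma~\ref{lem:breakcycles} (together with Lemma~\ref{lem:reflect}) is ``straightforward'' and gives no proof. The construction you use --- cutting each cycle of $\tau$ at its $W$-elements by composing with a cyclic permutation that sends each $w_{i+1}$ back to $w_i$ --- is the natural one the author presumably had in mind, and your verification of the three required properties (support in $W$, $\pi \preceq \tau$, and the cycle condition for $\pi\tau$) is careful and complete. One small point worth noting explicitly is that when two $W$-elements $w_i, w_{i+1}$ are adjacent along $C$ (no intervening $Z$-elements), the resulting cycle of $\pi\tau$ through $w_i$ is a singleton; this is consistent with the claim and your argument handles it implicitly, but it is the degenerate case a reader might pause on. No gaps.
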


\begin{lemma}
\label{lem:reflect}
Let $Z \subseteq [n]$. 
Let $\hat \tau \in \Sn$ be a permutation such that every cycle
contains at most one element $[n] \setminus Z$.
Then there exists an involution
$\xi \in \symgp_{Z}$ such that 
$\xi \preceq \hat \tau$
and
$\hat \tau = \xi \hat\tau^{-1} \xi$.
\end{lemma}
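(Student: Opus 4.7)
The plan is to build $\xi$ cycle by cycle from the cycle decomposition of $\hat\tau$. For each cycle $c$ of $\hat\tau$, I will define $\xi$ on the support of $c$ as an involution satisfying $\xi c \xi = c^{-1}$, while also fixing the (at most one) element of that support which lies in $[n] \setminus Z$. Patching these partial involutions together yields a global involution $\xi \in \symgp_Z$ with $\xi \hat\tau \xi = \hat\tau^{-1}$, which is equivalent to the required identity $\hat\tau = \xi \hat\tau^{-1} \xi$. The fixed-point condition $\xi \preceq \hat\tau$ comes for free, since the fixed points of $\hat\tau$ are precisely its $1$-cycles, and the construction fixes each such point on the nose.

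There are two cases per cycle. If $c \subseteq Z$, writing $c = (a_1\, a_2\, \dots\, a_\ell)$, I will let $\xi$ fix $a_1$ and swap $a_i \leftrightarrow a_{\ell+2-i}$ for $i = 2, \dots, \ell$. If $c$ contains the single element $b \in [n] \setminus Z$, writing $c = (b\, a_1\, \dots\, a_{\ell-1})$ with $a_i \in Z$, I will let $\xi$ fix $b$ and swap $a_i \leftrightarrow a_{\ell - i}$. A short direct calculation in each case confirms that $\xi$ is an involution conjugating $c$ to $c^{-1}$, and each formula clearly respects the constraint on which points must be fixed.

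There is no real obstacle here: the argument is a routine cycle-level verification. The one thing worth highlighting is why the hypothesis on $\hat\tau$ is needed. Any involution reversing a cycle of length $\ell \geq 3$ has at most two fixed points on that cycle (exactly one if $\ell$ is odd), and for generic positions of two ``forced'' fixed points within a cycle, no reversing involution fixes them both. The hypothesis that each cycle of $\hat\tau$ contains at most one element of $[n] \setminus Z$ is exactly what rules out this obstruction and makes the cycle-by-cycle construction always feasible. In the proof itself, recognising that this hypothesis is precisely what allows the construction to go through cycle-by-cycle is the only conceptual step; everything else is bookkeeping.
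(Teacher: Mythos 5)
The paper states this lemma without proof (``The following two lemmas are straightforward''), and your cycle-by-cycle reversal construction is precisely the straightforward argument the author has in mind. Your two explicit formulas are easily checked to give an involution conjugating each cycle to its inverse, your observation that the hypothesis forces at most one ``anchored'' fixed point per cycle correctly identifies why the construction always succeeds, and the condition $\xi \preceq \hat\tau$ is handled correctly since each $1$-cycle of $\hat\tau$ is fixed by your $\xi$; this is correct and fills the gap as intended.
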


For every pair $\tau \in \Sn$, $Z \subseteq [n]$, 
choose a permutation $\pi = \pi_{\tau,Z} \in \Sn$ as in 
Lemma~\ref{lem:breakcycles}
and let $\hat \tau = \pi_{\tau,Z} \tau$;
then choose an involution $\xi = \xi_{\tau,Z} \in \Sn$ 
as in Lemma~\ref{lem:reflect} and
let $\hat \xi_{\tau,Z} = \pi_{\tau,Z} \xi_{\tau,Z}$.
Note that $\xi_{\tau,Z}$ commutes with $\pi_{\tau,Z}$,
since $\pi_{\tau,Z} \in \symgp_{[n] \setminus Z}$ 
and $\xi_{\tau,Z} \in \symgp_{Z}$,
Note also that 
$\hat \xi_{\tau,Z} \preceq \tau$.

\begin{proposition}
\label{prop:commutebijection}
Consider the map
\begin{align*}
    \rho : B_{k,l} \times B_{k',n-k'} &\to B_{k',n-k'} \times B_{k,l}
\\
    \big(\sigma_X,Y \,;\, \sigma'_{X'},Y'\big) &\mapsto
    \big(\bar\sigma'_{\bar X'}, \bar Y'\,;\, \bar \sigma_{\bar X},\bar Y\big) 
\,,
\end{align*}
defined by
\begin{equation}
\label{eqn:thebijection}
{\begin{gathered}
   \bar X' = \hat \xi(X')\,,\quad
   \bar Y' = \hat \xi(Y')\,,
\\
   \bar X = \hat \xi(X)\,,\quad
   \bar Y = \hat \xi(Y)\,, 
\end{gathered}}\qquad \qquad
{\begin{aligned}
   \bar \sigma' &= 
   \hat \xi^{-1} (\sigma')^{-1} 
   \hat \xi^{-1}\,,
\\
   \bar \sigma &= 
   \hat \xi \sigma^{-1} \hat \xi^{-1}
\,,
\end{aligned}}
\end{equation}
where $\hat \xi = \hat \xi_{\sigma \sigma', Y \cup Y'}$.
Then $\rho$ is a bijection.

Furthermore, $\rho$ is weight preserving, in the sense 
that
$\sigma \sigma' = \bar \sigma' \bar \sigma$, 
$z_Y z_{Y'} = z_{\bar Y} z_{\bar Y'}$,
$\sgn(\sigma) = \sgn(\bar \sigma)$ and
$\sgn(\sigma') = \sgn(\bar \sigma')$.
\end{proposition}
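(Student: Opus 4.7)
The plan is to verify the four claims in the proposition---well-definedness, the product identity $\bar\sigma'\bar\sigma = \sigma\sigma'$, the monomial identity $z_Y z_{Y'} = z_{\bar Y} z_{\bar Y'}$, and bijectivity---on the back of a single structural observation: although $\hat\xi = \pi\xi$ need not be an involution, its restriction to $Z = Y \cup Y'$ coincides with $\xi|_Z$, which is. To set this up, write $\tau = \sigma\sigma'$, and first note that because $\pi \preceq \tau$ and $\xi \preceq \hat\tau = \pi\tau$, we have $\mathrm{supp}(\hat\xi) \subseteq \mathrm{supp}(\tau) \subseteq X \cup X'$. In particular $\hat\xi$ stabilizes both $Z$ and $[n]\setminus Z$ setwise, and acts on $Z$ as the involution $\xi$.

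I would first check that $\rho$ lands in $B_{k',n-k'} \times B_{k,l}$. Cardinalities, disjointness, and the support of $\bar\sigma = \hat\xi\sigma^{-1}\hat\xi^{-1}$ are immediate because $\hat\xi$ is a bijection and $\bar\sigma$ is a conjugate of $\sigma^{-1}$. The harder check is that $\bar\sigma' \in \symgp_{\bar X'}$, since its defining formula is not an ordinary conjugation. For $i \notin \bar X' = \hat\xi(X')$, the element $\hat\xi^{-1}(i)$ lies outside $X'$, so $(\sigma')^{-1}$ fixes it, and the formula collapses to $\bar\sigma'(i) = \hat\xi^{-2}(i)$. Since $[n]\setminus Z \subseteq X'$ gives $[n]\setminus\bar X' \subseteq Z$, and $\hat\xi^2$ acts on $Z$ as $\xi^2 = \mathrm{id}$, we get $\bar\sigma'(i) = i$.

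Next comes the weight preservation. The product identity unwinds algebraically to $\bar\sigma'\bar\sigma = \hat\xi^{-1}\tau^{-1}\hat\xi^{-1}$; writing $\hat\xi^{-1} = \pi^{-1}\xi$ and $\tau^{-1} = \hat\tau^{-1}\pi$, commuting $\pi$ past $\xi$, and invoking the reflective identity $\xi\hat\tau^{-1}\xi = \hat\tau$ from Lemma~\ref{lem:reflect}, this simplifies to $\pi^{-1}\hat\tau = \tau$. The sign identities are immediate: $\sgn(\bar\sigma) = \sgn(\sigma)$ because conjugation preserves sign, and $\sgn(\bar\sigma') = \sgn(\sigma')$ because two copies of $\sgn(\hat\xi^{-1})$ cancel. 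For $z_Y z_{Y'} = z_{\bar Y}z_{\bar Y'}$, the critical observation is that $\hat\xi$ fixes $Y \cap Y'$ pointwise: since $Y \cap X = \emptyset$ and $Y' \cap X' = \emptyset$, the intersection $Y \cap Y'$ is disjoint from $X \cup X' \supseteq \mathrm{supp}(\hat\xi)$. Hence $\bar Y \cap \bar Y' = Y \cap Y'$ and $\bar Y \cup \bar Y' = Y \cup Y'$ as sets, so the two multisets of indices agree.

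Finally, bijectivity follows because $\tau$ and $Z$ are preserved, so the same $\hat\xi$ is recovered from the image, and the defining formulas invert explicitly: $\sigma = \hat\xi^{-1}\bar\sigma^{-1}\hat\xi$ and $\sigma' = \hat\xi^{-1}(\bar\sigma')^{-1}\hat\xi^{-1}$, with the sets pulled back by $\hat\xi^{-1}$. The main technical obstacle throughout is the support check for $\bar\sigma'$, which hinges on the nontrivial fact that $\hat\xi|_Z$ is an involution even when $\hat\xi$ itself is not; everything else is either a bookkeeping check or a short algebraic manipulation using $\pi\xi = \xi\pi$ and $\xi\hat\tau^{-1}\xi = \hat\tau$.
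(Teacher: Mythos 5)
Your proof is correct and follows essentially the same route as the paper's, with one local variation worth noting. For the support check $\bar\sigma' \in \symgp_{\bar X'}$ (which you rightly identify as the one nontrivial point, and where the hypothesis $l' = n - k'$ enters), the paper rewrites $\bar\sigma' = \hat\xi(\sigma'\pi^2)^{-1}\hat\xi^{-1}$ as an ordinary conjugation of an element of $\symgp_{X'}$, whereas you do an element-by-element check on $[n] \setminus \bar X'$, using that $\hat\xi^{-2}$ is the identity on $Z$. Both work; your version is a bit more hands-on, the paper's is a one-line algebraic trick, but they both ultimately hinge on the same facts that $\pi \in \symgp_{X'}$ and $\hat\xi$ restricts on $Z$ to the involution $\xi$. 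The rest --- the computation $\bar\sigma'\bar\sigma = \pi^{-1}\xi\hat\tau^{-1}\xi = \tau$, the observation that $\hat\xi$ fixes $Y \cap Y'$ pointwise (you justify this by $Y\cap Y' \cap (X \cup X') = \emptyset$, which is exactly what the paper's appeal to $\hat\xi \preceq \tau$ amounts to), and recovering $\hat\xi$ from the image via preservation of $\tau$ and $Z$ to get injectivity --- all match the paper.
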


\begin{proof}
Given $(\sigma_{X},Y\,;\, \sigma'_{X'}, Y') \in B_{k,l} \times B_{k',n-k'}$, 
write $Z = Y \cup Y'$, $\tau = \sigma \sigma'$,
$\pi = \pi_{\tau,Z}$, $\xi = \xi_{\tau,Z}$, 
$\hat \tau = \pi \tau$, and  $\hat \xi = \pi \xi$.

We begin by verifying that
$(\bar \sigma'_{\bar X'}, \bar Y'\,;\, 
\bar \sigma_{\bar X},\bar Y) \in B_{k',n-k'} \times B_{k,l}$.
The only part of this claim that's not clear is the assertion 
$\bar \sigma'_{\bar X'} \in \SP_n$.  To see this, rewrite
the formula for $\bar \sigma'$ as
$\bar \sigma' = 
   \hat \xi (\sigma' \pi^2)^{-1} \hat \xi^{-1}$,
and note that $\sigma', \pi \in \symgp_{X'}$.  (Remark: Here is where
we need the assumption $l' = n-k'$; this implies
$X' = [n] \setminus Y' \supseteq [n] \setminus Z$, whence 
$\pi \in \symgp_{X'}$.)

We check that $\rho$ is weight preserving.  First of all,
\[
   \bar \sigma' \bar \sigma 
=
   \hat \xi^{-1} (\sigma')^{-1} \sigma^{-1} \hat \xi^{-1}
=
   \pi^{-1} \xi(\sigma')^{-1} 
\sigma^{-1} \pi^{-1} \xi
=
   \pi^{-1} \xi
\hat \tau^{-1}
\xi
=
   \pi^{-1} \hat\tau
=
\tau =  \sigma \sigma'.
\]
Next, since $\pi \in \symgp_{[n]\setminus Z}$, and 
$\xi \in \symgp_{Z}$,
$Z = Y\cup Y'$ is an invariant subset for
both $\pi$ and $\xi$ and hence 
it is invariant for $\hat \xi$.  
Therefore,
$\bar Y \cup \bar Y' =  \hat \xi (Y \cup Y') = Y \cup Y'$.
Also, since $\hat \xi \preceq \tau$, we have
$\bar Y \cap \bar Y' =  \hat \xi (Y \cap Y') = Y \cap Y'$.
Together, these imply that $z_Y z_{Y'} = z_{\bar Y} z_{\bar Y'}$.
The fact that
$\sgn(\sigma) = \sgn(\bar \sigma)$ and 
$\sgn(\sigma') = \sgn(\bar \sigma')$ is clear from~\eqref{eqn:thebijection}.

Finally, we check that $\rho$ 
is a bijection.  Since the
domain and codomain have the same cardinality, it suffices
to prove that $\rho$ is injective.  Suppose that
$\rho(\sigma_{\!1\;X_1},Y_1\,;\, \sigma'_{\!1\;X'_1}, Y'_1) = 
\rho (\sigma_{\!2\;X_2},Y_2\,;\, \sigma'_{\!2\;X'_2}, X'_2) =
(\bar \sigma'_{\bar X'}, \bar Y'\,;\,
\bar \sigma_{\bar X},\bar Y)$.
Let $Z_i = Y'_i \cup Y_i$, 
and $\tau_i = \sigma_i \sigma'_i$, for $i=1,2$.
By the preceding remarks, 
$Z_1 = \bar Y \cup \bar Y' = Z_2$,
and $\tau_1 = \bar \sigma'\bar\sigma =\tau_2$.
Therefore $\hat\xi_{\tau_1,Z_1} = \hat\xi_{\tau_2,Z_2}$.  
From~\eqref{eqn:thebijection}, it follows that 
$(\sigma_{\!1\;X_1},Y_1\,;\, \sigma'_{\!1\;X'_1}, Y'_1) = 
(\sigma_{\!2\;X_2},Y_2\,;\, \sigma'_{\!2\;X'_2}, X'_2)$.
\end{proof}

We now give the proof of Theorem~\ref{thm:commute}, with one small caveat:  
the final case in the proof uses Theorem~\ref{thm:polyfunctions}, 
which is proved in Section~\ref{sec:duality}.  This does not lead
to any circularity, since the final case is not used by any 
of the arguments in Section~\ref{sec:schubert} or~\ref{sec:duality}.
The argument below establishes the commutativity of 
$\bethe(z_1, \dots, z_n)$, and the
equality of the different algebras in the case where
$(z_1, \dots, z_n)$ is a general point of $\CC^n$. 
The final case, where $(z_1, \dots, z_n) \in \CC^n$ is arbitrary, is where
we need Theorem~\ref{thm:polyfunctions}.

We note that the commutativity of $\bethe(z_1, \dots, z_n)$ is enough to 
infer Theorem~\ref{thm:invwrsols} from Theorem~\ref{thm:main}:
the equality of the three algebras is not needed for this argument.
Therefore, in the remaining sections of the paper, we will freely use
Theorem~\ref{thm:invwrsols}.

\begin{proof}[Proof of Theorem~\ref{thm:commute}]
The bijection $\rho$ in Proposition~\ref{prop:commutebijection}
corresponds terms on the left hand side of~\eqref{eqn:commute} 
with terms on the right hand side, which proves commutativity.
This shows that $\bethe^\pm(z_1, \dots, z_n)$ and 
$\bethe(z_1, \dots, z_n)$ are commutative subalgebras of $\CC[\Sn]$.

If $(z_1, \dots, z_n) \in \CC^n$ is general, then
$\bethe^\pm(z_1, \dots, z_n)$ are both maximal commutative
subalgebras of $\CC[\Sn]$.
This follows from the fact 
that for $(z_1, \dots, z_n)$ general, 
$\bethe^{\pm}(z_1, \dots, z_n)$
is a deformation of the Gelfand--Tsetlin subalgebra of $\CC[\Sn]$
\cite[Proposition 2.5]{MTV-Sn}, which is a maximal commutative
subalgebra (see~\cite{OV}).
We deduce that
$\bethe^-(z_1, \dots, z_n) = \bethe(z_1, \dots, z_n)
= \bethe^+(z_1, \dots, z_n)$
for $(z_1, \dots, z_n)$ general.

Proving this equality of algebras for arbitrary $(z_1, \dots, z_n)$ 
is a bit more involved.
We need to show that for all $k,l$, there exists a polynomial 
function which expresses $\beta^+_{k,l}$ in terms of
the operators $\beta^-_{k',l'}$, and vice-versa. This is the content
of Theorem~\ref{thm:polyfunctions}.
\end{proof}

%
%
\section{Schubert cells}
\label{sec:schubert}

Let $V = \langle f_1, \dots, f_n \rangle$ be an $n$-dimensional subspace 
of $\CC[u]$.  Write $d_i = \deg(f_i)$.
We may assume that our basis for $V$ is chosen such that
$d_1 > d_2 > \dots > d_n$.  Let $\lambda_i = d_i-n+i$.  Then
$\lambda = (\lambda_1, \lambda_2 , \dots, \lambda_n)$
is a partition.
(Note that here, some of the ``parts'' $\lambda_i$ may be zero.)
We say $V$ has \defn{Schubert type} $\lambda$, and the numbers
$d_1, \dots d_n$ are called the \defn{exponents of $V$ at infinity}.
The space of all $V$ of Schubert type $\lambda$ is called
a \defn{Schubert cell}, and is denoted $\scell$.
Note that $|\lambda| = \deg(\Wr_V)$.

The fundamental differential operator $D_V$ encodes the 
Schubert type $V$, as follows.  If $g(u) \in \CC(u)$ is
a non-zero rational function, we say that $c \in \CC^\times$ is the
\defn{leading coefficient} of $g$ if $c^{-1}g$ is monic; if $g = 0$,
the leading coefficient of $g$ is $0$.  Let $\indicial_k(D_V)$ 
denote the leading coefficient of $(-1)^{n-k}\Dcoeff{D_V}{k}$,
and let $\indicial(D_V) = (\indicial_0(D_V), \dots, \indicial_n(D_V))$.

\begin{proposition}
\label{prop:indicial}
Let $V, V'$ be $n$-dimensional subspaces of $\CC[u]$.
$V$ and $V'$ have the same exponents at infinity if and only if
$\indicial(D_V) = \indicial(D_{V'})$.
\end{proposition}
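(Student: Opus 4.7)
The plan is to introduce the \emph{indicial polynomial of $D_V$ at infinity},
\[
I_V(d) \;:=\; \sum_{k=0}^n (-1)^{n-k} (d)_k \, \indicial_k(D_V)\,,
\]
where $(d)_k := d(d-1)\cdots(d-k+1)$ denotes the falling factorial, and to prove the explicit formula $I_V(d) = \prod_{i=1}^n (d-d_i)$. Once this is established, the proposition follows at once: the tuple $\indicial(D_V)$ consists, up to signs, of the coordinates of $I_V$ in the basis $\{(d)_0, (d)_1, \dots, (d)_n\}$ of $\CC[d]_{\leq n}$, so $\indicial(D_V)$ and $I_V$ determine each other; and since $I_V$ will be monic of degree $n$ with distinct roots, it determines and is determined by the multiset $\{d_1, \dots, d_n\}$.

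The first step is to read off the behavior of each $\Dcoeff{D_V}{k}$ as $u \to \infty$ from the determinantal formula for $D_V$. Choosing a basis $(f_1, \dots, f_n)$ of $V$ with $\deg f_i = d_i$, a routine degree count gives $\deg M_k \leq \sum_i d_i - \binom{n+1}{2} + k$ and $\deg \Wr_V = \sum_i d_i - \binom{n}{2}$, where $M_k$ is the minor appearing in the formula for $D_V$. Hence $\Dcoeff{D_V}{k} = (-1)^{n-k} M_k/\Wr_V$ has order at most $u^{k-n}$ at infinity, and the coefficient of $u^{k-n}$ in its Laurent expansion is exactly $(-1)^{n-k} \indicial_k(D_V)$ by the very definition of $\indicial_k$. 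Substituting this into
\[
D_V u^d \;=\; \sum_{k=0}^n \Dcoeff{D_V}{k} \cdot (d)_k \, u^{d-k}
\]
and extracting the $u^{d-n}$ contribution yields the key asymptotic identity $D_V u^d = I_V(d)\, u^{d-n} + O(u^{d-n-1})$ as $u \to \infty$. Inspecting the $k = n$ summand (where $(d)_n$ has $d$-degree $n$ and $\indicial_n(D_V) = 1$) shows that $I_V$ is monic of degree $n$ in $d$.

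The second step is to locate the roots of $I_V$. After rescaling I may assume each $f_i$ is monic, so $f_i = u^{d_i} + g_i$ with $\deg g_i < d_i$, and $D_V f_i = 0$ gives $D_V u^{d_i} = -D_V g_i$. Each summand $\Dcoeff{D_V}{k} \cdot g_i^{(k)}$ of $D_V g_i$ has order at most $(k - n) + (d_i - 1 - k) = d_i - n - 1$ at infinity, so $D_V g_i = O(u^{d_i - n - 1})$. Comparing with $D_V u^{d_i} \sim I_V(d_i)\, u^{d_i - n}$ forces $I_V(d_i) = 0$ for every $i$. Since $I_V$ is monic of degree $n$ with $n$ distinct roots $d_1, \dots, d_n$, it must equal $\prod_i (d - d_i)$, completing the proof.

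The main point to watch out for is the case where some $\indicial_k(D_V)$ happen to vanish: then the order of $\Dcoeff{D_V}{k}$ at infinity drops strictly below $u^{k-n}$, and the literal ``leading term'' at that order is zero. This is harmless---a vanishing $\indicial_k$ contributes zero both to the relevant coefficient of $D_V u^d$ and to the defining sum for $I_V(d)$---but it is the reason the degree counts must be phrased as inequalities throughout rather than equalities.
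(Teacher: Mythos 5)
Your proof is correct in its essentials, and it takes a genuinely different route from the paper. The paper's proof is a one-line appeal to the classical theory of regular singular points (the citation to Ince, \S 7.21), whereas you supply a short self-contained argument: form the indicial polynomial $I_V(d)=\sum_k(-1)^{n-k}(d)_k\,\indicial_k(D_V)$, read off the asymptotics of $D_Vu^d$ from a degree count on the minors $M_k$, and then use $D_Vf_i=0$ to show $I_V(d_i)=0$, so that $I_V=\prod_i(d-d_i)$. This makes the proposition independent of the reference, which is worthwhile.

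One loose end is worth flagging. The paper defines $\indicial_k(D_V)$ to be the \emph{leading coefficient} of $(-1)^{n-k}\Dcoeff{D_V}{k}$ as a monic rational function, which by the paper's conventions is \emph{nonzero} whenever $\Dcoeff{D_V}{k}\neq 0$. Your proof silently replaces this with the coefficient of $u^{k-n}$ in the Laurent expansion at $u=\infty$, and the phrase ``by the very definition of $\indicial_k$'' is not quite right: the two quantities coincide only when $\Dcoeff{D_V}{k}$ has order exactly $k-n$ (or vanishes identically), while the degree count gives only the inequality $\leq k-n$. Your closing caveat paragraph gestures at this but discusses the harmless case $\Dcoeff{D_V}{k}=0$; the case that would actually break the argument is $\Dcoeff{D_V}{k}\neq 0$ with order strictly less than $k-n$, in which the leading coefficient is nonzero while the $u^{k-n}$ coefficient is zero. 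In fact this case never arises for an $n$-dimensional $V\subset\CC[u]$ --- if $\Dcoeff{D_V}{k}\neq 0$ one can show $\deg M_k$ equals the expected value $\sum d_i-\binom{n+1}{2}+k$ --- but that requires a small argument of its own. Since the paper's citation implicitly suppresses the same point, this is a minor patch rather than a defect specific to your write-up; with it in place the proof is sound.
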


\begin{proof}
The exponents of $V$ at infinity are the roots of the \emph{indicial equation}
\[
    \sum_{k=0}^n (-1)^{n-k}\, \indicial_k(D_V) \cdot x (x-1) \dotsm (x-k+1) = 0
\,,
\]
(see e.g. \cite[\S 7.21]{Ince}).
\end{proof}

\begin{theorem}
\label{thm:schubert}
Let $\lambda \vdash n$, and let $E \subset M^\lambda$ 
be an eigenspace of the Bethe algebra, of type $\lambda$.
Then $V_E = \ker(\calD^-_E)$ is in the Schubert cell $\scell$.
\end{theorem}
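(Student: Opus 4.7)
The plan is to identify $\calD^-_E$ as the fundamental differential operator $D_{V_E}$, and then read off the Schubert type of $V_E$ from the indicial polynomial of $\calD^-_E$.

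By Theorem~\ref{thm:main} together with the bound $V_E \subseteq \ker(\du^{2n}) \subset \CC[u]$ derived in the Introduction, $V_E = \ker(\calD^-_E)$ is an $n$-dimensional space of polynomials. Since $\calD^-_E$ is monic of order $n$, Proposition~\ref{prop:FDO}(ii) yields $D_{V_E} = \calD^-_E$. The coefficient $\Dcoeff{\calD^-_E}{n-j}$ equals $(-1)^j\beta^-_{j,n-j,E}(u)/w(u)$, a ratio of monic polynomials of the same degree whose leading coefficient is $(-1)^j \beta^-_{j,0,E}$, so $\indicial_{n-j}(\calD^-_E) = \beta^-_{j,0,E}$. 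By Proposition~\ref{prop:indicial}, it then suffices to show that the indicial polynomial
\[
   I(x) \;=\; \sum_{j=0}^n (-1)^j \beta^-_{j,0,E}\, x(x-1)\cdots(x-n+j+1)
\]
factors as $\prod_{i=1}^n\big(x - (\lambda_i + n - i)\big)$, which will imply $V_E \in \scell$.

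The crucial observation is that $I(x)$ is a $\lambda$-intrinsic invariant. Indeed, each $\beta^-_{j,0} = \sum_{|X|=j}\alpha^-_X$ is independent of the parameters $z_1,\dots,z_n$ and, by Theorem~\ref{thm:centre}, lies in $Z(\CC[\Sn])$; hence $\beta^-_{j,0,E}$ depends only on the irreducible type $\lambda$ of $E$, not on $E$ or $w(u)$ themselves. Thus $I(x)$, and so the multiset of its roots, is determined by $\lambda$ alone, and the claim reduces to a purely representation-theoretic identity about central elements of $\CC[\Sn]$ acting on $M^\lambda$.

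The main obstacle is proving this identity. One route is to compute $\chi^\lambda(\beta^-_{j,0}) = \binom{n}{j}\chi^\lambda(\alpha^-_{[j]})$ via Frobenius reciprocity, using that induction of $\sgn_{\symgp_j}$ from $\symgp_j$ up to $\Sn$ has Frobenius characteristic $e_j\, p_1^{n-j}$; this yields an expression for $\beta^-_{j,0,E}$ as a weighted sum of $\numsyt{\nu}$ over $\nu \subseteq \lambda$ with $\lambda/\nu$ a vertical strip of size $j$, after which a symmetric-function or Jucys--Murphy manipulation matches $I(x)$ with $\prod_i(x-d_i)$. A softer alternative is to specialize: since $I(x)$ depends only on $\lambda$, we are free to pick any convenient $w(u)$. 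Taking $w(u) = u^n$ (i.e.\ $z_i = 0$), one checks that $\bethe(0,\dots,0) = Z(\CC[\Sn])$ and that, on each isotypic component $M^\lambda$, the restriction of $\calD^-_n$ is an Euler-type operator whose polynomial kernel is spanned by monomials $u^{d_1},\dots,u^{d_n}$; identifying these exponents for each $\lambda$ (directly, or against a Schubert-calculus count matching the number of solutions of Schubert type $\lambda$ against the dimension of $M^\lambda$) then pins down $I(x)$ in general.
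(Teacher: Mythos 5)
Your setup is sound and, up to the point where the real work begins, mirrors the paper's own proof: you correctly identify $\calD^-_E = D_{V_E}$, correctly compute $\indicial_{n-j}(\calD^-_E) = \beta^-_{j,0,E}$, correctly reduce via Proposition~\ref{prop:indicial} to an identity for the indicial polynomial $I(x)$, and correctly observe that $I$ depends only on $\lambda$ because the $\beta^-_{j,0}$ are central and $z$-independent. The Frobenius characteristic $\schur_{1^{n-j}}\schur_1^{j}$ is exactly the symmetric function that appears in the paper.

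However, the crucial step --- establishing that $I(x) = \prod_{i=1}^n\bigl(x - (\lambda_i + n - i)\bigr)$ --- is not actually proved by either route you sketch. In route (a), ``a symmetric-function or Jucys--Murphy manipulation'' is a placeholder where the paper instead deploys a specific trick: differentiating the identity
$\Wr\bigl(\tfrac{u^{d_1}}{d_1!},\dots,\tfrac{u^{d_n}}{d_n!},g\bigr) = \sum_k (-1)^{n-k} c^\lambda_k u^k g^{(k)}$
to obtain the recurrence $c^\lambda_k = \sum_{\mu\lessdot\lambda} \tfrac{1}{k}c^\mu_{k-1}$, and then showing (via the Pieri identity $\langle \schur_\lambda, \phi\,\schur_1\rangle = \sum_{\mu\lessdot\lambda}\langle\schur_\mu,\phi\rangle$) that the normalized character values $\tfrac{\dim M^\lambda}{n!}\,b^\lambda_k = \tfrac{1}{k!}\langle\schur_\lambda,\schur_{1^{n-k}}\schur_1^k\rangle$ satisfy the \emph{same} recurrence with the same initial conditions. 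That parallel-recurrence idea is the heart of the argument and is absent from your proposal. In route (b), the $z=0$ specialization is a genuinely appealing alternative idea (the reduction $\bethe(0,\dots,0) = Z(\CC[\Sn])$ and the Euler-operator form of $\calD^-_n$ are both correct), but the proposed finishing move does not close the loop: a Schubert-calculus count only constrains the \emph{number} of solutions of each Schubert type, and since distinct partitions can have equal $\numsyt{\lambda} = \dim M^\lambda$, matching cardinalities cannot determine \emph{which} Schubert cell $V_{M^\lambda}$ lies in. Identifying the exponents ``directly'' is simply a restatement of the unproven identity. So both routes leave the same gap, and some actual computation in the spirit of the paper's recurrence argument is still required.
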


\begin{proof}
For $g(u) \in \CC(u)$, we can write
\begin{equation}
\label{eqn:indicial}
   \Wr(\tfrac{u^{d_1}}{d_1!}, \dots, \tfrac{u^{d_n}}{d_n!},g)
   = 
     \sum_{k=0}^{n-1} (-1)^{n-k} c^\lambda_k u^k g^{(k)}(u) 
\,,
\end{equation}
for some sequence of rational numbers $(c^\lambda_0, \dots, c^\lambda_n)$.
Up to a scalar multiple, $(c^\lambda_0, \dots, c^\lambda_n)$ is 
equal to $\indicial(D_{V_0})$,
where $V_0 =  \langle u^{d_1}, \dots, u^{d_n} \rangle \in \scell$.
By Proposition~\ref{prop:indicial}, 
for any $n$-dimensional subspace $V \subset \CC[u]$, we have
$V \in \scell$ if and only if 
$\indicial(D_V) = r(c^\lambda_0, \dots, c^\lambda_n)$ for some constant $r$.

Taking derivatives of both sides of~\eqref{eqn:indicial}, 
and using the fact 
\[
  \Wr(f_1, \dots, f_m)' 
  = \sum_{i=1}^m \Wr(f_1, \dots, f_{i-1}, f_i', f_{i+1}, \dots, f_m)
\,,
\]
we obtain the following recurrence for the numbers $c_k^\lambda$:
\[
     c^\lambda_k = 
   \begin{cases} 
   1  &\quad \text{if $k = 0$, $\lambda = 1^n$} \\
   0  &\quad \text{if $k = 0$, $\lambda \neq 1^n$} \\
   \sum_{\mu \lessdot \lambda} \frac{1}{k} c^\mu_{k-1} 
      &\quad \text{if $k \geq 1$.}
   \end{cases}
\]
In the last case, the sum is taken over all partitions $\mu \vdash n-1$ 
such that $\mu_i \leq \lambda_i$ for all $i$.

The elements $\beta^-_{n-k,0} \in \bethe^-(z_1, \dots, z_n)$ 
do not depend 
on $z_1, \dots, z_n$, and are in the centre of $\CC[\Sn]$.
Hence $\beta^-_{n-k,0}$ acts as a scalar $b^\lambda_k$ on $M^\lambda$.  
Considering the trace, we find that
\[
 \dim M^\lambda \cdot 
  b^\lambda_k 
= 
    \sum_{\substack{X \subseteq [n] \\|X| = n-k}}
       \chi^\lambda(\alpha_X^-)
        = \frac{n!}{k!}
            \langle \schur_\lambda ,\schur_{1^{n-k}}\schur_1^k \rangle
\,.
\]
Here $\chi^\lambda : \CC[\Sn] \to \CC$ denotes the character of 
$M^\lambda$, and 
$\schur_\lambda$ is the Schur symmetric function;
the second equality above uses the Frobenius characteristic map.
It is well-known that
$\langle \schur_\lambda, \phi \schur_1 \rangle = 
\sum_{\mu \lessdot \lambda} \langle \schur_\mu , \phi \rangle$
for any symmetric function $\phi$; hence the 
numbers $\frac{\dim M^\lambda}{n!} b^\lambda_k$ satisfy the same 
recurrence as the numbers $c^\lambda_k$, and we conclude that
$(b^\lambda_0, \dots, b^\lambda_n) = \frac{n!}{\dim M^\lambda} 
(c^\lambda_0, \dots, c^\lambda_n)$.
From the definition of $\calD^-_E$, we have that
$(b^\lambda_0, \dots, b^\lambda_n) = \indicial(\calD^-_E)$,
and therefore $V_E \in \scell$.
\end{proof}

We can now see that $\beta_{0,0}^-, \dots, \beta_{n,0}^-$
generate the centre of $\CC[\Sn]$.

\begin{proof}[Proof of Theorem~\ref{thm:centre}]
Suppose $\gamma_1, \dots, \gamma_m \in Z(\CC[\Sn])$, where
$\gamma_i$ acts as the scalar $\gamma^\lambda_i$ on $M^\lambda$.
The elements $\gamma_1, \dots, \gamma_m$ generate $Z(\CC[\Sn])$ 
if and only if the tuples $(\gamma^\lambda_1, \dots, \gamma^\lambda_m)$
are distinct for distinct partitions $\lambda \vdash n$.
The proof of Theorem~\ref{thm:schubert} establishes this for
the elements $\beta^-_{0,0}, \dots, \beta^-_{n,0} \in Z(\CC[\Sn])$.
\end{proof}

\begin{remark}
\label{rmk:bijectivecorrespondence}
If $z_1, \dots, z_n$ are generic, then there are exactly $\dim M^\lambda$
distinct solutions to the inverse Wronskian problem in the Schubert cell
$\scell$, and there are exactly $\dim M^\lambda$ eigenspaces of the
Bethe algebra of type $\lambda$.  The first statement is a computation
in the Schubert calculus (see e.g. \cite[\S 2.2]{LP}); 
the second statement follows from the fact that $\bethe^-(z_1, \dots, z_n)$ 
is a deformation of the Gelfand--Tsetlin 
algebra \cite[Proposition 2.5]{MTV-Sn}.  This numerical coincidence 
explains why every
$n$-dimensional solution to the inverse Wronskian problem is of
the form $V_E = \ker(\calD^-_E)$.
\end{remark}

\begin{remark}
None of the theorems discussed in this section are new.
Proposition~\ref{prop:indicial} is from the classical
theory of Fuchsian differential equations.
Theorem~\ref{thm:schubert} is implicitly part of the 
content of \cite[Theorem 4.3]{MTV-Sn}; the proof above is based on
the same main idea, but avoids using Schur--Weyl duality.
Theorem~\ref{thm:centre} is \cite[Proposition 2.1]{MTV-Sn}, and
the authors' assertion that this follows from \cite[Proposition 3.5]{MTV-Sn}
is essentially the proof given above.
\end{remark}

%
%
\section{Duality}
\label{sec:duality}

Let $\lambda = (\lambda_1, \dots, \lambda_n)$ be a partition,
and let 
$d_i = \lambda_i+n-i$.
Let $e_1 < e_2 < e_3 < \dotsb$ denote the non-negative integers distinct
from $d_1, \dots, d_n$.
For $V \in \scell$, define the \defn{canonical basis} of $V$ 
to be the unique basis 
$(f_1, \dots, f_n)$ of the form
\[
   f_i(u) = \frac{u^{d_i}}{d_i!} + 
\sum_{j=1}^{\lambda_i}
   (-1)^{1+n-i-j+e_j} 
    v_{ij}
   \frac{u^{e_j}}{e_j!}
  \,.
\]
The coefficients $(v_{ij})_{j \leq \lambda_i}$ of the canonical
basis polynomials are called the \defn{canonical coordinates} of $V$.

Let $\CC_{2n-1}[u] = \ker(\du^{2n})$ denote the $2n$-dimensional
vector space of
polynomials of degree at most $2n-1$, and
let $\Gr(n, \CC_{2n-1}[u])$ denote the Grassmannian variety of
$n$-dimensional linear subspaces of $\CC_{2n-1}[u]$.  As a set,
$\Gr(n, \CC_{2n-1}[u])$ is the union of all Schubert cells
$\scell$ for which $\lambda_1 \leq n$.

Now assume that $\lambda_1 \leq n$, and
let $\lambda^* = (\lambda^*_1, \dots, \lambda^*_n)$ 
denote the conjugate partition of $\lambda$.
If $V \in \scell$ has Schubert type $\lambda$ and canonical coordinates 
$(v_{ij})_{j \leq \lambda_i}$, then there is another $n$-dimensional
subspace $V^* \in \dualscell$ 
with Schubert type $\lambda^*$ and canonical
coordinates $(v^*_{ij})_{j \leq \lambda^*_i}$, such that 
$v^*_{ij} = v_{ji}$ for all $i,j$.  $V^*$ is called the 
\defn{Grassmann dual} of $V$.  

\begin{proposition}
\label{prop:samewronskian}
The map $V \mapsto V^*$ defines
an automorphism of the variety $\Gr(n,\CC_{2n-1}[u])$.
Moreover, for every $V \in \Gr(n,\CC_{2n-1}[u])$, we have
$\Wr_V = \Wr_{V^*}$.
\end{proposition}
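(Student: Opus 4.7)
The plan is to address the two parts of the proposition --- the automorphism property and the Wronskian identity --- separately.

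For the automorphism claim, partition conjugation $\lambda \mapsto \lambda^*$ is an involution on partitions contained in the $n \times n$ box (i.e., those with $\lambda_1 \leq n$ and $\ell(\lambda) \leq n$), and the coordinate swap $v^*_{ij} = v_{ji}$ satisfies $v^{\ast\ast}_{ij} = v_{ij}$. Hence $(V^*)^* = V$, and $V \mapsto V^*$ is a set-theoretic involution on $\Gr(n,\CC_{2n-1}[u])$. On each Schubert cell, the map $\scell \to \dualscell$ is literally the identity on the affine coordinate space spanned by the $v_{ij}$, hence an isomorphism of affine varieties. To lift the cellwise bijections to a regular morphism on the full Grassmannian, I would express $V^*$ as a polynomial function of the Pl\"ucker coordinates of $V$, at which point the involution extends to a morphism of the whole variety.

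The substance of the proposition is the Wronskian identity. My plan is to use the Pl\"ucker expansion
\[
\Wr(f_1,\dots,f_n)(u) \;=\; \sum_I p_I(V)\,\alpha_I\, u^{\sum I - \binom{n}{2}},
\]
where $(f_1,\dots,f_n)$ is the canonical basis of $V$, the sum ranges over $n$-element subsets $I\subseteq\{0,1,\dots,2n-1\}$, and $\alpha_I$ is an explicit nonzero rational constant depending only on $I$ (coming from the Wronskian of monomials $\{u^i/i!:i\in I\}$, which is a single monomial in $u$ of degree $\sum I - \binom{n}{2}$). After monic normalization, the desired identity $\Wr_V = \Wr_{V^*}$ reduces to the family of linear relations
\[
\sum_{I:\;\sum I = s} \alpha_I\, p_I(V) \;=\; \sum_{I:\;\sum I = s} \alpha_I\, p_I(V^*),
\]
one for each admissible $s$. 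The approach is to identify $V \mapsto V^*$ with a Grassmann self-duality $V \mapsto V^\perp$ for a suitable non-degenerate bilinear form on $\CC_{2n-1}[u]$, under which the Pl\"ucker coordinates transform in a way that preserves these weighted sums term-by-term. The sign pattern $(-1)^{1+n-i-j+e_j}$ built into the definition of the canonical basis is engineered precisely so that this identification goes through.

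The main obstacle is the sign and normalization bookkeeping needed to match the canonical-coordinate swap $v_{ij} \leftrightarrow v_{ji}$ with Grassmann self-duality for the correct choice of bilinear form. In particular, one must pin down the form on $\CC_{2n-1}[u]$ so that the perpendicular $V^\perp$ acquires canonical coordinates that are literally $v_{ji}$ (with the right signs), rather than only agreeing up to some cell-dependent rescaling. Once this identification has been verified, the Wronskian identity follows from the general principle that the Wronskian, expressed via Pl\"ucker coordinates as above, is preserved under Grassmann self-duality of $\Gr(n,\CC_{2n-1}[u])$.
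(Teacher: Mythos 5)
The paper does not prove this proposition: it simply cites \cite[Remark 2.5]{LP}. Your strategy --- realize $V \mapsto V^*$ as a Grassmann self-duality $V \mapsto V^\perp$ for a suitable nondegenerate bilinear form on $\CC_{2n-1}[u]$, then compare \Plucker expansions of the Wronskian --- is the standard apolarity argument and is almost certainly what the cited remark relies on, so in spirit you are on the paper's track, just supplying details the paper outsources.

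That said, the ``sign and normalization bookkeeping'' you defer is not a peripheral detail; it is the entire content of the proposition, and it genuinely could fail for a poor choice of form. It is \emph{not} true that $\Wr_V = \Wr_{V^\perp}$ for an arbitrary nondegenerate form: with $n=1$ and the standard symmetric product $\langle u^i,u^j\rangle=\delta_{ij}$ on $\CC_1[u]$, one has $\langle 1\rangle^\perp = \langle u\rangle$, whose Wronskian is $u\ne 1$; and with that same form $\langle 1+cu\rangle^\perp=\langle 1-cu\rangle$, which does \emph{not} realize the coordinate swap $v_{ij}\leftrightarrow v_{ji}$. So your plan reduces the proposition to two coupled verifications that must be carried out together for one specific form (an apolar pairing of the shape $\langle u^i/i!,u^j/j!\rangle\propto(-1)^i\delta_{i+j,2n-1}$, which is what the sign $(-1)^{1+n-i-j+e_j}$ in the canonical basis is calibrated for): (i) that the perpendicular of $V$ for this form has canonical coordinates exactly $v_{ji}$, not merely up to a cell-dependent rescaling; and (ii) that the induced involution $\tau(I)=\{2n{-}1{-}i: i\notin I\}$ on \Plucker indices scales the coefficients $\alpha_I$ and the \Plucker coordinates by matching factors, so the degree-$s$ piece of the Wronskian is preserved up to one global constant. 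Until both are pinned down, you have a correct plan rather than a proof --- but it is the right plan, and it simultaneously delivers the automorphism claim (since $V\mapsto V^\perp$ is manifestly a regular involution of the Grassmannian), so you need not lift the cellwise maps separately.
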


\begin{proof}
See \cite[Remark 2.5]{LP}.
\end{proof}

\begin{example}
Let $V_{E_1}$ and $V_{E_2}$ 
be the solutions to the inverse Wronskian problem for $w(u) = u^3-3u$,
from Example~\ref{ex:21}:  
\begin{align*}
V_{E_1} &= \ker (\calD^-_{E_1}) = \langle u^4+4u^3, u^2-2u,1 \rangle
\\
V_{E_2} &= \ker (\calD^-_{E_2}) = \langle u^4-4u^3, u^2+2u,1 \rangle
\,.
\end{align*}
$V_{E_1}$ has canonical coordinates $(v_{11}, v_{12}, v_{21}) = (0,1,-1)$
and $V_{E_2}$ has canonical coordinates $(v_{11}, v_{12}, v_{21}) = (0,-1,1)$,
so these spaces satisfy $V_{E_1}^* = V_{E_2}$.
\end{example}

Let $\star : \Dalg[\Sn] \to \Dalg[\Sn]$ denote the algebra 
automorphism which acts trivially on $\Dalg$, and by
$\star \sigma = \sgn(\sigma)\sigma$ on $\CC[\Sn]$.  Hence,
\[
   \star\,\Big(\sum_{\sigma \in \Sn} \Psi_\sigma \sigma\Big)
   =
   \sum_{\sigma \in \Sn} \sgn(\sigma) \Psi_\sigma \sigma
\,.
\]
In particular, note that $\star \beta^+_{k,l} = \beta^-_{k,l}$
and $\star \beta^-_{k,l} = \beta^+_{k,l}$.  
Applying $\star$ to Theorem~\ref{thm:main}, we obtain:
\begin{corollary}
   $(\star \calD^+_n)(\star\calD^-_n) = \du^{2n}.$
\end{corollary}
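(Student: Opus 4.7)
The strategy is a one-line application of the automorphism $\star$ to both sides of the identity in Theorem~\ref{thm:main}. Since $\star : \Dalg[\Sn] \to \Dalg[\Sn]$ is by definition an algebra automorphism, it is in particular multiplicative, so
\[
\star(\calD^+_n \calD^-_n) = (\star\calD^+_n)(\star\calD^-_n).
\]
On the right-hand side, $\du^{2n} \in \Dalg \subset \Dalg[\Sn]$ (under the identification $\Psi \mapsto \Psi \otimes \idSn$), and $\star$ acts as the identity on $\Dalg$, so $\star(\du^{2n}) = \du^{2n}$. Combining these two observations with $\calD^+_n \calD^-_n = \du^{2n}$ yields the corollary immediately.

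Before carrying out this argument, I would briefly verify that the formula defining $\star$ really does extend to a well-defined algebra automorphism of $\Dalg[\Sn]$. The map $\sigma \mapsto \sgn(\sigma)\sigma$ is an automorphism of $\CC[\Sn]$ because $\sgn : \Sn \to \{\pm 1\}$ is a group homomorphism, and the identity on $\Dalg$ is trivially an automorphism. These combine to an automorphism of the tensor product $\Dalg \otimes \CC[\Sn] = \Dalg[\Sn]$ precisely because $\Psi \gamma = \gamma \Psi$ for $\Psi \in \Dalg$ and $\gamma \in \CC[\Sn]$, as noted in Section~\ref{sec:bethe}; so multiplication in $\Dalg[\Sn]$ is compatible with the two-factor definition of $\star$. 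The remark preceding the corollary already records that $\star \beta^+_{k,l} = \beta^-_{k,l}$ and vice versa, which gives the promised interpretation of the two factors on the left-hand side.

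There is no genuine obstacle: all of the mathematical content is carried by Theorem~\ref{thm:main}, and the corollary is simply the observation that this identity is preserved under an automorphism that toggles signs on $\Sn$. The only thing to be slightly careful about is verifying that $\star$ respects the commutativity between $\Dalg$ and $\CC[\Sn]$ in $\Dalg[\Sn]$, but as above, this holds by construction.
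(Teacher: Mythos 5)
Your proof is correct and is exactly the paper's argument: apply the algebra automorphism $\star$ to both sides of Theorem~\ref{thm:main}, using that $\star$ is multiplicative and fixes $\du^{2n} \in \Dalg$. The extra sanity check that $\star$ is well-defined on $\Dalg[\Sn]$ is harmless but not needed, since the paper has already introduced $\star$ as an automorphism.
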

Therefore, if $E \subset M^\lambda$ is
an eigenspace of $\bethe^+(z_1, \dots, z_n)$, then
$\ker(\star\calD^-_{E})$ is a solution to the inverse Wronskian problem
for $w(u)$.
This can also been seen from the fact that
$\star$ corresponds to tensoring with sign representation $\SgnRep$.
If $E\subset M^\lambda$ is an eigenspace of $\bethe^+(z_1, \dots, z_n)$, then
$E \otimes \SgnRep \subset M^\lambda \otimes \SgnRep = M^{\lambda^*}$
is manifestly an eigenspace of $\bethe^-(z_1, \dots, z_n)$.
We have $\beta^+_{k,l,E} = \beta^-_{k,l,E \otimes \SgnRep}$, so 
$\star \calD^-_{E} = \calD^-_{E \otimes \SgnRep}$.

This observation becomes more interesting in light of the
fact that $\bethe^+(z_1, \dots, z_n)$ commutes with
$\bethe^-(z_1, \dots, z_n)$.
If we take $E$ to be an eigenspace of
$\bethe(z_1, \dots, z_n)$,
then we have two solutions, $V_E = \ker (\calD^-_{E})$
and $V_{E \otimes \SgnRep} = \ker (\star \calD^-_{E})$, 
to the inverse Wronskian
problem, both associated to $E$.  These two solutions are related
by Grassmann duality.

\begin{theorem}
\label{thm:dual}
If $E$ is an eigenspace of $\bethe(z_1,\dots, z_n)$, then
$V_{E \otimes \SgnRep} = V_E^*$.
\end{theorem}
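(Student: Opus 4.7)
The strategy is to prove the sharper identity $D_{V_E^*} = D_{V_{E \otimes \SgnRep}}$ of fundamental differential operators, from which the equality of subspaces follows. The right hand side is already at our disposal: by Proposition~\ref{prop:FDO}(ii) we have $D_{V_{E \otimes \SgnRep}} = \calD^-_{E \otimes \SgnRep}$, and we have just observed that $\calD^-_{E \otimes \SgnRep} = \star\calD^-_E$.

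For the left hand side, the key is a general Grassmann duality identity for fundamental differential operators: for any $V \in \Gr(n,\CC_{2n-1}[u])$, since $V \subset \ker \du^{2n}$ there exists a unique monic operator $\Psi \in \Dalg$ of order $n$ satisfying $\Psi \cdot D_V = \du^{2n}$, and I claim
\[
   D_{V^*} = (-1)^n \Psi^\dagger,
\]
where $\Psi^\dagger = \sum_j (-\du)^j \psi_j(u)$ is the formal adjoint of $\Psi = \sum_j \psi_j(u)\du^j$. I would establish this either by a direct computation using canonical coordinates on $\scell$ and $\dualscell$, or more conceptually from the bilinear pairing on $\CC_{2n-1}[u]$ that governs the Grassmann duality of \cite[Remark 2.5]{LP}. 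Applying the identity with $V = V_E$: Proposition~\ref{prop:FDO} gives $D_{V_E} = \calD^-_E$, and Theorem~\ref{thm:main} restricted to $E$ provides the factorization $\calD^+_E \cdot \calD^-_E = \du^{2n}$ with $\calD^+_E$ monic of order $n$ (its leading coefficient is $\beta^+_{0,n,E}(u)/w(u) = 1$, because $\beta^+_{0,l} = e_l(z_1,\dots,z_n)\,\idSn$). Uniqueness of the left factor forces $\Psi = \calD^+_E$, so $D_{V_E^*} = (-1)^n (\calD^+_E)^\dagger$.

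A direct formal-adjoint calculation using $(AB)^\dagger = B^\dagger A^\dagger$ and $\du^\dagger = -\du$ then yields
\[
   (-1)^n (\calD^+_E)^\dagger = \tfrac{1}{w(u)} \sum_{k=0}^n (-1)^k \beta^+_{k,n-k,E}(u) \du^{n-k} = \star\calD^-_E,
\]
which matches the right hand side and proves the theorem. The main obstacle is the Grassmann duality identity $D_{V^*} = (-1)^n \Psi^\dagger$; granting this, everything else reduces to an application of Theorem~\ref{thm:main} and routine manipulation of the explicit formulas for $\calD^\pm_n$.
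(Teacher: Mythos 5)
Your approach is genuinely different from the paper's and stays closer to the paper's stated philosophy of working directly with differential operators. The paper proves Theorem~\ref{thm:dual} by degenerating $\bethe(z_1,\dots,z_n)$ to the Gelfand--Tsetlin algebra, attaching standard Young tableaux to both subspaces and eigenspaces (drawing on \cite{OV,Pur-Gr,Pur-ribbon,Sp,W}), matching them via the chain $T_{E\otimes\SgnRep}=T_E^*=T_{V_E}^*=T_{V_E^*}=T_{E^*}$, and then extending from generic to arbitrary $(z_1,\dots,z_n)$ by continuity. Your argument instead stays inside $\Dalg$: the cofactor $\Psi$ with $\Psi D_{V_E}=\du^{2n}$ is unique (left division in $\Dalg$ plus Proposition~\ref{prop:order}), Theorem~\ref{thm:main} identifies it as $\calD^+_E$ (and your observation that $\calD^+_E$ is monic, since $\beta^+_{0,n,E}(u)=w(u)$, is correct), and the terminal adjoint computation $(-1)^n(\calD^+_E)^\dagger=\star\calD^-_E$ is a short and accurate calculation. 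This yields a uniform proof for all $(z_1,\dots,z_n)$ with no genericity or parallel-transport step and no tableau combinatorics.

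The one substantive missing piece is the Grassmann-duality identity $D_{V^*}=(-1)^n\Psi^\dagger$, which you correctly single out as the crux. It is true, and the conceptual route you gesture at is the right one: the bilinear concomitant of $\du^{2n}$, namely $[f,g]=\sum_{k=0}^{2n-1}(-1)^k f^{(k)}g^{(2n-1-k)}$, is a nondegenerate form on $\CC_{2n-1}[u]=\ker(\du^{2n})$, and the standard factorization identity $[f,g]_{MN}=[f,Ng]_M+[M^\dagger f,g]_N$, applied to $\du^{2n}=\Psi D_V$, shows that $\ker(\Psi^\dagger)$ is exactly the $[\cdot,\cdot]$-orthogonal complement of $V=\ker(D_V)$. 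What then remains is to verify that this orthogonal complement agrees with the $V^*$ of Section~\ref{sec:duality}, defined via canonical coordinates with the particular signs $(-1)^{1+n-i-j+e_j}$; this is a direct but nontrivial check that needs to be written out. Since the lemma concerns Grassmann duality and fundamental differential operators alone, with no reference to the Bethe algebra, there is no circularity, and filling in this step would make your proof complete.
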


\begin{proof}
Proposition~\ref{prop:samewronskian} implies that
there exists an eigenspace $E^* \subset M \otimes \SgnRep$, such
that $V_E^* = V_{E^*}$.  We must show that $E^* = E \otimes \SgnRep$, for
$(z_1, \dots, z_n)$ general.  By continuity, this implies
the result for all $(z_1, \dots, z_n)$.  Note
that the relationship between $E^*$ and $E$ is completely determined 
by what happens at any general point $(z_1, \dots, z_n)$, by parallel
transport. So it is enough to prove this for $(z_1, \dots, z_n)$ 
belonging to some Zariski dense open subset of $\CC^n$. 

Consider
the degeneration of $\bethe(z_1, \dots, z_n)$ to the Gelfand-Tsetlin
algebra \cite[Proposition 2.5]{MTV-Sn}, 
which can be obtained by substituting $z_i \to t^iz_i$, and 
letting $t \to \infty$.
For $t$ large, this degeneration process allows us to
assign standard Young tableaux
$T_{V_E}$, $T_{V_E^*}$, $T_E$, $T_{E^*}$, $T_{E \otimes \SgnRep}$, 
to each of
$V_E$, $V_E^*$, $E$, $E^*$, $E \otimes \SgnRep$
(see \cite{OV, Pur-Gr, Pur-ribbon, Sp, W}).  
In the case
of $V \subset \CC[u]$, the tableau $T_V$ is defined in terms the asymptotics
of the coordinates of $V$; in the case of $E \subset M^\lambda$, the
limit is an eigenspace
of the Gelfand--Tsetlin algebra, which
naturally has an associated tableau.  
Each tableau uniquely
identifies the subspace of $\CC[u]$ or eigenspace of 
$\bethe(z_1, \dots, z_n)$ in question.

Furthermore, these tableaux are related.
Using the definition of $T_V$ from \cite[\S 2.1]{Pur-ribbon},
Theorem~\ref{thm:schubert} implies that
$T_{V_E} = T_E$ for any eigenspace $E$ of the Bethe algebra 
(see also \cite{W}).
It follows from Proposition~\ref{prop:samewronskian}
that $T_{V_E^*} = T_{V_E}^*$.
Finally, $T_{E \otimes \SgnRep} = T_E^*$ 
is a basic property of the Gelfand--Tsetlin algebra 
(see \cite{OV}).
Here if $T$ is a standard Young tableau, $T^*$ denotes the conjugate
tableau, obtained by reflecting $T$ along the main diagonal. 
Putting this all together, we have 
\[ 
  T_{E \otimes \SgnRep} = T_E^* = T_{V_E}^* = T_{V_E^*} 
  = T_{E^*}
\,.
\]
Hence $E^* = E \otimes \SgnRep$ as required.
\end{proof}

We now use duality to finish the proof of Theorem~\ref{thm:commute},
showing that $\bethe^+(z_1, \dots, z_n) = \bethe^-(z_1, \dots, z_m)
= \bethe(z_1, \dots, z_n)$
for all $(z_1, \dots, z_n) \in \CC^n$.

\begin{lemma}
\label{lem:polysolution}
Let $V \in \scell$, with canonical coordinates
$(v_{ij})_{j \leq \lambda_i}$.  For $j \leq \lambda_i$, let
$s_{ij}$ be the coefficient of $u^{e_j}$
in $\Dcoeff{\Wr_V D_V u^{d_i}}{0}$.
Then $s_{ij}$ is given by a polynomial in the canonical 
coordinates with $\QQ$-coefficients, which is of the form
\[
      s_{ij} = c_{ij} v_{ij} + r_{ij}
\,,
\]
where $c_{ij} \in \QQ$ is a non-zero constant, and $r_{ij}$ is a polynomial
involving only the coordinates
   $\{v_{i'j'} \mid d_{i'}-e_{j'} < d_i - e_j\}$.
\end{lemma}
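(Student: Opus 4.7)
The plan is to open up $\Dcoeff{\Wr_V D_V u^{d_i}}{0}$, which up to a nonzero scalar equals the Wronskian $\Wr(f_1,\dots,f_n,u^{d_i})$, by the multilinearity of the Wronskian in each of its arguments. Substituting the canonical basis expression $f_k = \frac{u^{d_k}}{d_k!}+\sum_{j'=1}^{\lambda_k}(-1)^{1+n-k-j'+e_{j'}}v_{k,j'}\frac{u^{e_{j'}}}{e_{j'}!}$ into each row of the Wronskian determinant yields a sum indexed by tuples $(a_1,\dots,a_n)$ with $a_k\in A_k := \{d_k\}\cup\{e_{j'}:j'\leq\lambda_k\}$, with summand $\bigl(\prod_k c_{k,a_k}\bigr)\Wr(u^{a_1},\dots,u^{a_n},u^{d_i})$. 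Each factor $c_{k,a_k}$ with $a_k=e_{j_k}$ carries exactly one canonical coordinate $v_{k,j_k}$, so the monomial degree of a summand in the $v$'s equals the number of $k$'s for which $a_k\neq d_k$.

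Using the Vandermonde--Wronskian identity $\Wr(u^{b_1},\dots,u^{b_m})=\bigl(\prod_{k<l}(b_l-b_k)\bigr)u^{\sum b_k-\binom{m}{2}}$, each summand is a monomial in $u$, nonzero precisely when $a_1,\dots,a_n,d_i$ are pairwise distinct. Matching the exponent to $e_j$ and using $\sum d_k = \binom{n}{2}+|\lambda|$ produces a single linear condition on the $a_k$'s. Introducing the weight grading $\weight(v_{k,j'}) := d_k-e_{j'}>0$ on the canonical coordinates, this condition translates to: every contributing monomial has total $v$-weight exactly $d_i-e_j$.

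It remains to isolate the linear-in-$v$ contribution. The distinctness constraint forces $a_i\neq d_i$, so every contributing tuple has $a_i=e_{j_0}$ for some $j_0\leq\lambda_i$, and the monomial always carries a factor $v_{i,j_0}$ of weight $d_i-e_{j_0}$. A monomial with exactly one $v$-factor therefore has $a_k=d_k$ for all $k\neq i$, and the weight equality then forces $j_0=j$; this yields the main term $v_{ij}$ with rational coefficient
\[
    c_{ij} \;=\; \pm\frac{1}{e_j!}\prod_{k\neq i}\frac{1}{d_k!}\prod_{k<l}(b_l-b_k),
\]
where $(b_1,\dots,b_{n+1})=(d_1,\dots,d_{i-1},e_j,d_{i+1},\dots,d_n,d_i)$. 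Because $d_1,\dots,d_n$ together with $e_j$ are pairwise distinct by definition of the $e$'s, the Vandermonde product is nonzero, so $c_{ij}\in\QQ\setminus\{0\}$. Any other contributing monomial has at least two $v$-factors of strictly positive weights summing to $d_i-e_j$, so each individual factor $v_{k,j_k}$ satisfies $d_k-e_{j_k}<d_i-e_j$; such contributions are absorbed into $r_{ij}$.

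The one point requiring care is the observation that the distinctness condition $a_i\neq d_i$ --- not merely the weight count --- is what singles out $(i,j)$ as the unique coordinate whose linear term appears. Once this is in place, the remaining verification is routine bookkeeping of signs, factorials, and the nonvanishing of the Vandermonde factor.
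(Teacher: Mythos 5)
Your proof is correct and follows essentially the same strategy as the paper's: both rest on the weight grading $\weight(v_{kj'}) = d_k - e_{j'}$ and on the observation that every term of $s_{ij}$ carries exactly one $v$-factor from row $i$. You establish the latter by fully expanding the Wronskian and invoking Vandermonde distinctness (the duplicate $u^{d_i}$ in the last row forces $a_i\neq d_i$); the paper instead rewrites $\Wr(f_1,\dots,f_n,u^{d_i}) = \Wr(f_1,\dots,f_i - u^{d_i}/d_i!,\dots,f_n,u^{d_i})$ and reads off linearity in $f_i - u^{d_i}/d_i!$, a more economical route to the same fact. One small point you share with the paper: the assertion that the total $v$-weight of a contributing monomial equals $d_i - e_j$ (rather than $|\lambda|-n+d_i-e_j$) uses $\sum_k d_k = \binom{n+1}{2}$, i.e.\ $|\lambda| = n$; this is the only setting in which the lemma is invoked, but it is not stated as a hypothesis, and the conclusion genuinely fails without it.
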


\begin{proof}
Let $(f_1, \dots, f_n)$ be the canonical basis for $V$.  
Up to an irrelevant non-zero scalar, $s_{ij}$
is equal to the coefficient of $u^{e_j}$ in
$\Wr(f_1, \dots, f_n, u^{d_i})$.
This is a polynomial
in the canonical coordinates with $\QQ$-coefficients. 
Rewriting the Wronskian as
\[
\Wr(f_1,\dots,  f_{i-1}, f_i-\tfrac{u^{d_i}}{d_i!}, f_{i+1}, \dots, , f_n, 
  u^{d_i})
\,.
\]
we see that $s_{ij}$ is a linear function of $f_i-\frac{u^{d_i}}{d_i!}$,
so each term in $s_{ij}$ must contain exactly one $v_{ij'}$ for
some $j' \leq \lambda_i$.

Now, think of $v_{ij}$ as an indeterminate of degree $d_i - e_j$; hence
$f_i$ is a homogeneous polynomial of degree $d_i$. 
Then $s_{ij}$ is homogeneous of degree
$d_i - e_j$, which means it can only involve
indeterminates of degree $d_i - e_j$ or less.  This, together with the
preceding remarks shows that $s_{ij} = c_{ij} v_{ij} + r_{ij}$, where
$r_{ij}$ only involves indeterminates of 
degree less than $d_i - e_j$.
Finally, 
\[
 c_{ij} = (-1)^{1+n-i-j+e_j} u^{-e_j}
\Wr(\tfrac{u^{d_1}}{d_1!},\dots,  \tfrac{u^{d_{i-1}}}{d_{i-1}!}, 
   \tfrac{u^{e_j}}{e_j!}, \tfrac{u^{d_{i+1}}}{d_{i+1}!}, \dots, 
\tfrac{u^{d_n}}{d_n!}, u^{d_i})
\,,
\]
which is non-zero, since the exponents $d_1, \dots, d_n, e_j$ are
distinct.
\end{proof}

\begin{theorem}
\label{thm:polyfunctions}
For all $k,l \leq n$ there exist polynomials with $\QQ$-coefficients
which express the operators $\beta^{+}_{k,l}$ as a function the operators
$\beta^{-}_{k',l'}$, and vice-versa.
\end{theorem}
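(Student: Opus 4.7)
The plan is to prove the forward statement --- that each $\beta^+_{k,l}$ is a $\QQ$-polynomial in the $\beta^-_{k',l'}$ --- and then deduce the reverse by applying the algebra automorphism $\star$ of $\CC[\Sn]$, which exchanges $\beta^+_{k,l}$ with $\beta^-_{k,l}$ and commutes with polynomial evaluation over $\QQ$.

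First I would work on a single joint eigenspace $E \subseteq M^\lambda$ of $\bethe(z_1,\dots,z_n)$ for generic $(z_1,\dots,z_n)$. Applying the identity $w(u) D_{V_E} = \sum_{k=0}^n (-1)^k \beta^-_{k,n-k,E}(u)\du^{n-k}$ to $u^{d_i}$ and extracting the $u^{e_j}$-coefficient gives
\[
s_{ij} \;=\; \sum_{k=0}^{n}(-1)^k\frac{d_i!}{(d_i-n+k)!}\,\beta^-_{k,\,d_i-e_j,\,E}\,,
\]
a $\QQ$-linear expression with no residual $z$-dependence. Lemma~\ref{lem:polysolution} inverts this triangularly, so each canonical coordinate $v_{ij}$ of $V_E$ is a $\QQ$-polynomial in the eigenvalues $\beta^-_{k',l',E}$. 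By Theorem~\ref{thm:dual}, $V_{E\otimes\SgnRep} = V_E^*$ has canonical coordinates $v^{(*)}_{ij} = v_{ji}$, hence these too are $\QQ$-polynomials in the $\beta^-_{k',l',E}$. Finally I would run the analogous construction in reverse on $V_E^*$: since $\Wr_{V_E^*} = w$, the identity $\Wr(f_1^*,\dots,f_n^*,\cdot) = c_\lambda\, w(u)\,D_{V_E^*}$ holds for an explicit constant $c_\lambda \in \QQ^\times$ depending only on $\lambda$; expanding the determinantal left hand side and matching coefficients of $\du^{n-k}u^{n-k-l}$ expresses each $\beta^-_{k,l,E\otimes\SgnRep}$ as a $\QQ$-polynomial in the canonical coordinates of $V_E^*$. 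Composing the three maps, and using $\beta^-_{k,l,E\otimes\SgnRep} = \beta^+_{k,l,E}$, yields a $\QQ$-polynomial $P^\lambda_{k,l}$ depending only on $\lambda$ with $\beta^+_{k,l,E} = P^\lambda_{k,l}(\beta^-_{k',l',E})$.

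To combine the $P^\lambda_{k,l}$ into one polynomial, I would use the primitive central idempotents. Each $e_\lambda \in Z(\CC[\Sn])$ has rational coefficients in the permutation basis (since $\chi^\lambda$ is $\ZZ$-valued and $\dim M^\lambda \in \ZZ_{>0}$), so $e_\lambda \in Z(\QQ[\Sn])$; and Theorem~\ref{thm:centre} together with a dimension count over $\QQ$ upgrades to the statement that $e_\lambda$ is a $\QQ$-polynomial in $\beta^-_{0,0}, \dots, \beta^-_{n,0}$. Setting $Q_{k,l} := \sum_{\lambda \vdash n} e_\lambda\, P^\lambda_{k,l}(\beta^-_{k',l'})$, both $\beta^+_{k,l}$ and $Q_{k,l}$ lie in the commutative algebra $\bethe(z_1,\dots,z_n)$ and act as the same scalar on every joint eigenspace; for generic $z$ this algebra acts semisimply, forcing $\beta^+_{k,l} = Q_{k,l}$ as group algebra elements. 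Since both sides are polynomial functions $\CC^n \to \CC[\Sn]$ of $(z_1,\dots,z_n)$, agreement on a Zariski-dense open set extends to all of $\CC^n$. Applying $\star$ to this identity then yields the reverse expression.

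The main obstacle is arranging that no step introduces spurious $z$-dependence into the intermediate polynomials. The forward map $\beta^-_{k',l',E} \mapsto s_{ij}$ is $\QQ$-linear by direct calculation; the Grassmann duality $v_{ij} \leftrightarrow v_{ji}$ is purely combinatorial; and in the backward step the potentially $z$-dependent factor $w(u)$ enters only through the relation $\Wr_{V_E^*} = w$, which has already been used to replace $w\, D_{V_E^*}$ by the scalar multiple $c_\lambda^{-1}\Wr(f_1^*,\dots,f_n^*,\cdot)$, a quantity manifestly $\QQ$-polynomial in the canonical coordinates alone. Once no $z$'s remain in the individual $P^\lambda_{k,l}$, the same is true of $Q_{k,l}$, and the polynomial-extension argument over $\CC^n$ completes the proof.
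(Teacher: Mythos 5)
Your proof is correct and follows essentially the same route as the paper's: the three steps you describe — recovering the canonical coordinates of $V_E$ from $w\calD^-_E$ via Lemma~\ref{lem:polysolution}, applying Grassmann duality together with Theorem~\ref{thm:dual}, and reassembling the dual fundamental operator to read off $\beta^+_{k,l,E}$ — are exactly the paper's composition $\Theta^\lambda = \Omega^{\lambda^*}\circ\Delta^\lambda\circ\Upsilon^\lambda$, and the subsequent patching with central idempotents (Theorem~\ref{thm:centre}) and extension from generic to all $(z_1,\dots,z_n)$ by Zariski density, followed by $\star$, is likewise identical. No gap.
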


\begin{proof}
First note that if we have a formula for $\beta^+_{k,l}$ in terms
of the $\beta^-_{k',l'}$, then applying $\star$ to both sides gives
a formula for $\beta^{-}_{k,l}$ in terms of the $\beta^+_{k',l'}$,
so the ``vice-versa'' statement will be automatic.

Let $\beta^\pm_{k,l,\lambda} \in \End(M^\lambda)$ and 
$\calD^\pm_\lambda \in \Dalg \otimes \End(M^\lambda)$ denote
the restrictions of operators $\beta^\pm_{k,l}$ and $\calD^\pm_n$ to 
$M^\lambda$.
Let $P_\lambda \in Z(\CC[\Sn])$ denote the central idempotent which acts
as the identity on $M^\lambda$, and as zero on $M^{\lambda'}$, 
$\lambda' \neq \lambda$.
By Theorem~\ref{thm:centre}, $P_\lambda$
is given by some polynomial with $\QQ$-coefficients
in $\beta_{0,0}^-, \dots, \beta_{n,0}^-$.
We will prove that there exist polynomials $Q_{k,l, \lambda}$
with $\QQ$-coefficients
which express $\beta^+_{k,l,\lambda}$ in terms of $\beta^-_{k',l',\lambda}$
for each $\lambda \vdash n$.  This is sufficient, as
$\sum_{\lambda \vdash n} Q_{k,l,\lambda}P_\lambda$ will then give
a polynomial expression for
$\beta^+_{k,l}$ in terms of $\beta^-_{k',l'}$.

Let $\Delta^\lambda : \scell \to \dualscell$ denote the
map $V \mapsto V^*$.
Let $\calY_n \subset \Dalg$ be the vector space of differential
operators $\Psi$ of order at most $n$, such that 
$\Dcoeff{\Psi}{i}$ is a polynomial of degree at most $i$.
Let $\Omega^\lambda : \scell \to \calY_n$ be the map defined
by $\Omega^\lambda(V) = \Wr_V D_V$.  
Both $\Delta^\lambda$ and $\Omega^\lambda$ are defined by polynomials
with $\QQ$-coefficients.

Lemma~\ref{lem:polysolution} shows that
$\Omega^\lambda : \scell \to \calY_n$ 
has an 
left-inverse $\Upsilon^\lambda: \calY_n \to \scell$, defined
by polynomials with $\QQ$-coefficients: given $\Psi = \Wr_V D_V$,
we can solve for the canonical coordinates $(v_{ij})_{j \leq \lambda_i}$ 
of $V$, 
recursively, in increasing order of $d_i - e_j$.

Now consider the composition 
\[
\Theta^\lambda = 
  \Omega^{\lambda^*} \circ \Delta^\lambda \circ \Upsilon^\lambda
\,,
\]
which is a polynomial map from $\calY_n$ to itself.
By definition, for all $V \in \scell$,
$\Theta^\lambda(\Wr_V D_V) = \Wr_{V^*}D_{V^*}$.
Thus, if
$E \subset M^\lambda$ is an eigenspace of $\bethe(z_1, \dots, z_n)$, then
by Theorem~\ref{thm:dual},
\[
\Theta^\lambda(w \calD^-_{E}) = w \calD^-_{E \otimes \SgnRep} 
= \star (w \calD^-_{E})
\,.
\]
Now assume $(z_1, \dots, z_n) \in \CC^n$ is general, so that
$M^\lambda$ is a direct sum of eigenspaces of $\bethe(z_1, \dots, z_n)$.
Then we have
$\Theta^\lambda(w \calD^-_\lambda) = \star (w \calD^-_\lambda)$.
Finally, since this is a polynomial identity which holds for 
$(z_1, \dots, z_n)$ general, 
it holds for all $(z_1, \dots, z_n) \in \CC^n$.
Therefore the required polynomials $Q_{k,l,\lambda}$ are just the 
coordinates of the map $\Theta^\lambda$.
\end{proof}

\begin{remark}
The maps $\Omega^\lambda$ and $\Upsilon^\lambda$ in the proof of 
Theorem~\ref{thm:polyfunctions} are essentially the isomorphism
described in \cite[Theorem 4.3(iv)]{MTV-Sn} and its inverse.
The proof of Lemma~\ref{lem:polysolution} is based on
\cite[Lemma 4.5]{MTV-transverse}.
\end{remark}

%
%
\section{Discussion}
\label{sec:conclusion}

Theorem~\ref{thm:invwrsols} and the results of 
Sections~\ref{sec:schubert}
and~\ref{sec:duality} mainly focus on the differential 
operators $\calD^-_n$ and $\calD^-_E$.
It not obvious what the corresponding story is for $\calD^+_n$.

Let $E$ be an eigenspace of $\bethe(z_1, \dots, z_n)$, and consider
the subspace $V^+_E = \ker (\calD^+_E) \subset \CC(u)$.  
We would like to know which subspaces of $\CC(u)$ are of this form.
As already noted in the introduction, Theorem~\ref{thm:main} tells us 
that $V^+_E$ is $n$-dimensional, with $\Wr_{V^+_E} = \frac{1}{w}$.
We now state a slightly stronger necessary condition.

\begin{proposition}
If $V^+_E = \ker (\calD^+_E)$, where $E$ is an eigenspace of
the Bethe algebra, then $w V^+_E = \{wg \mid g \in V^+_E\}$ 
is an $n$-dimensional
vector space of polynomials, which is an $n$-dimensional 
solution to the inverse Wronskian problem for $w^{n-1}$.
Furthermore $w V^+_E \in \scell$ for some partition
$\lambda = (\lambda_1, \dots, \lambda_n)$
with $\lambda_1 \leq n$.
\end{proposition}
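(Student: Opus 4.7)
The plan is to identify $V^+_E$ explicitly as the image of $\calD^-_E$ on polynomials, from which every claim in the proposition will drop out. First I would apply Theorem~\ref{thm:main} on the eigenspace $E$, yielding $\calD^+_E \calD^-_E = \du^{2n}$. For any $f \in \ker(\du^{2n}) = \CC_{2n-1}[u]$, this gives $\calD^+_E(\calD^-_E f) = 0$, so $\calD^-_E(\CC_{2n-1}[u]) \subseteq V^+_E$. The restriction of $\calD^-_E$ to $\CC_{2n-1}[u]$ has kernel exactly $V_E = \pker(\calD^-_E)$, which is $n$-dimensional, so its image has dimension $2n-n=n$. On the other hand, $\calD^+_E$ is a monic operator of order $n$ (the leading coefficient of $\calD^+_n$ is $1$ because $\beta^+_{0,n,E}(u) = w(u)$), so Proposition~\ref{prop:order} gives $\dim V^+_E \leq n$, and hence $V^+_E = \calD^-_E(\CC_{2n-1}[u])$.

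Next I would exploit the explicit form $w\calD^-_E = \sum_{k=0}^n (-1)^k \beta^-_{k,n-k,E}(u)\, \du^{n-k}$, which has polynomial coefficients, with $\beta^-_{k,n-k,E}(u)$ of degree at most $n-k$. Applying $w\calD^-_E$ to any $f \in \CC_{2n-1}[u]$ yields a polynomial, and each term $\beta^-_{k,n-k,E}(u)\, f^{(n-k)}(u)$ has degree at most $(n-k) + (2n-1-(n-k)) = 2n-1$. Thus $wV^+_E \subseteq \CC_{2n-1}[u]$; combined with the fact that $g \mapsto wg$ is injective, $wV^+_E$ is an $n$-dimensional space of polynomials of degree at most $2n-1$. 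The top exponent $d_1$ therefore satisfies $d_1 \leq 2n-1$, so $\lambda_1 = d_1 - n + 1 \leq n$, settling the Schubert cell claim.

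For the Wronskian claim, Proposition~\ref{prop:FDO}(i) applied to the monic order-$n$ operator $\calD^+_E$ (whose kernel has dimension $n$ by the first paragraph) gives $\calD^+_E = D_{V^+_E}$. The already-noted identity $\Dcoeff{\calD^+_E}{n-1} = w'(u)/w(u)$ then forces $\Wr_{V^+_E}'/\Wr_{V^+_E} = -w'/w$, so $\Wr_{V^+_E} = 1/w$. A Leibniz-rule column operation shows that, for any basis $(f_1, \dots, f_n)$ of $V^+_E$,
\[
\Wr(wf_1, \dots, wf_n) = w^n\, \Wr(f_1, \dots, f_n),
\]
since the matrix $\bigl((wf_i)^{(j)}\bigr)$ factors as $\bigl(f_i^{(j)}\bigr)$ times an upper-triangular matrix with diagonal $w,\dots,w$. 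Consequently $\Wr_{wV^+_E} = w^n \cdot (1/w) = w^{n-1}$, showing that $wV^+_E$ is an $n$-dimensional polynomial solution to the inverse Wronskian problem for $w^{n-1}$.

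There is no serious obstacle here; the entire argument hinges on the first step, where Theorem~\ref{thm:main} simultaneously forces $\dim V^+_E = n$ and identifies $V^+_E$ concretely as $\calD^-_E(\CC_{2n-1}[u])$. Once that identification is in hand, polynomiality, the degree bound, and the Wronskian computation are routine consequences of the explicit form of $\calD^-_E$ and Proposition~\ref{prop:FDO}.
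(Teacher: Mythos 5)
Your proof is correct and takes essentially the same route as the paper: both identify $V^+_E$ with $\calD^-_E(\CC_{2n-1}[u])$ using Theorem~\ref{thm:main}, deduce polynomiality and the degree bound for $wV^+_E$ from the explicit form of $w\calD^-_E$, and conclude via the identity $\Wr(wf_1,\dots,wf_n)=w^n\,\Wr(f_1,\dots,f_n)$. One small point in your favour: the degree bound you establish, $2n-1$, is exactly what is needed to conclude $\lambda_1\leq n$, whereas the paper's proof loosely states ``degree at most $2n$.''
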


\begin{proof}
It also follows from Theorem~\ref{thm:main} that $\ker (\calD^+_E)$
is contained in the image of $\calD^-_E$ restricted to $\ker (\du^{2n})$.  
In particular, $\ker (\calD^+_E)$ has a basis of the form 
$(\frac{f_1}{w}, \dots, \frac{f_n}{w})$,
where $f_1, \dots, f_n$
are polynomials of degree at most $2n$.  This shows that
$w V^+_E \subset \CC[u]$, and its Schubert type $\lambda$ satisfies 
$\lambda_1 \leq n$.
Finally we have the identity 
\[
\Wr(gf_1, \dots, g f_n) = g^n \cdot \Wr(f_1, \dots, f_n)
\]
for any $g(u) \in \CC(u)$. Hence the fact that 
$\Wr_{V^+_E} = \frac{1}{w}$ implies that 
$\Wr_{w V^+_E} = w^n \cdot \frac{1}{w}$.
\end{proof}

The converse is false.  If $V$ is an $n$-dimensional solution to the 
inverse Wronskian problem for $w^{n-1}$, with appropriate conditions
on the Schubert type,
it is not necessarily true that $V = w V^+_E$ for some eigenspace
of the Bethe algebra $E$.  We have a pretty good guess what the
right sufficient condition is.

\begin{conjecture}
\label{conj:D+}
Suppose $z_1, \dots, z_n$ are distinct.
Let $V \subset \CC[u]$ be an $n$-dimensional vector space of
Schubert type $\lambda$, such that $\Wr_V = w^{n-1}$.
Then $V = wV^+_E$ for some eigenspace $E$ of
$\bethe(z_1, \dots, z_n)$, 
if and only if $V$ belongs to the Schubert intersection 
\[
   \scell \cap X_{1^{n-1}}(z_1) \cap \dots \cap X_{1^{n-1}}(z_n)
\,.
\]
\end{conjecture}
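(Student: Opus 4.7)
For the forward direction, I would use the identification
\[
   wV^+_E = \{\Wr(f_1, \dots, f_n, f) : f \in \CC_{2n-1}[u]\},
\]
where $(f_1, \dots, f_n)$ is the basis of $V_E$ normalized so that $\Wr(f_1, \dots, f_n) = w$. Since $\Wr_{V_E} = w$ vanishes simply at each $-z_i$, $V_E$ has ramification partition $(1)$ there, with orders of vanishing $(0, 1, \dots, n-2, n)$. For any $f \in \CC_{2n-1}[u]$, by subtracting a suitable element of $V_E$ we may assume $\ord_{-z_i}(f) \in \{n-1\} \cup \{n+1, n+2, \dots\}$. The standard formula
\[
\ord_{-z_i} \Wr(h_1, \dots, h_{n+1}) = \textstyle\sum_j \ord_{-z_i}(h_j) - \binom{n+1}{2},
\]
valid when the orders of the $h_j$ are distinct, then gives $\ord_{-z_i} \Wr(f_1, \dots, f_n, f) \in \{0, 2, 3, 4, \dots\}$, never equal to $1$. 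This is precisely the ramification condition $1^{n-1}$ at each $-z_i$.

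Before the converse, I would pin down the Schubert type: if $V_E \in \mathcal{X}^\mu$ for $\mu \vdash n$, a leading-term calculation (using $\sum_i d_i = \binom{n+1}{2}$ when $|\mu|=n$) shows that the exponents of $wV^+_E$ at infinity are the complement in $\{0, 1, \dots, 2n-1\}$ of those of $V_E$. Hence $wV^+_E \in \mathcal{X}^{\mu^\vee}$, where $\mu^\vee_i = n - \mu_{n+1-i}$ is the complement of $\mu$ in the $n \times n$ box; for a given $\lambda$ in the conjecture statement, the relevant eigenspaces must therefore live in $M^{\lambda^\vee}$.

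For the converse, the plan is to combine injectivity of $E \mapsto wV^+_E$ with a cardinality match. Injectivity is immediate: $V^+_E = (wV^+_E)/w$ determines $\calD^+_E = D_{V^+_E}$ by Proposition~\ref{prop:FDO}, hence the scalars $\beta^+_{k,l,E}$, hence $E$ by Theorem~\ref{thm:commute}. For generic $z$, Remark~\ref{rmk:bijectivecorrespondence} gives $\dim M^{\lambda^\vee}$ eigenspaces on $M^{\lambda^\vee}$; one must then show
\[
    |\scell \cap X_{1^{n-1}}(z_1) \cap \dotsb \cap X_{1^{n-1}}(z_n)| = \dim M^{\lambda^\vee}
\]
as well. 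Combined with the forward direction, injectivity then forces a bijection for generic $z$, and semi-continuity extends it to arbitrary distinct $z_i$.

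The main obstacle will be this Schubert-theoretic count. The naive codimension total is $|\lambda| + n(n-1) = 2n(n-1) > n^2$ for $n \geq 3$, so the intersection is overdetermined; the non-emptiness reflects the fact that the Schubert conditions together with $V \in \scell$ force $\Wr_V = w^{n-1}$ rather than being independent constraints. A careful excess-intersection analysis, reducing to an identity such as $K_{(\lambda^\vee)', (n-1)^n} = \dim M^{\lambda^\vee}$ (provable by a tableau bijection), should complete the argument. A potentially cleaner alternative, bypassing Schubert calculus entirely, would be a direct construction: starting from $V$ satisfying the Schubert conditions, show that the coefficients of $D_{V/w}$ have the polynomial-over-$w$ form required of $\calD^+_E$, extract the scalars $\beta^+_{k,l}$, and verify that they correspond to an actual eigenspace of $\bethe(z_1, \dots, z_n)$ in $M^{\lambda^\vee}$.
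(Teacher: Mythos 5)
This statement is a \emph{conjecture} in the paper, not a theorem: the paper explicitly says the author has only verified it for $n \leq 3$, and sketches a different intended route via the machinery of \cite{MTV-transverse} applied to the $\gl_n(\CC[t])$-module $(\wedge^{n-1}\CC^n)^{\otimes n}$ and Schur--Weyl duality. So there is no proof in the paper to compare against; I can only assess your proposal on its own.

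Your forward direction is essentially sound: the identification $wV^+_E = \{\Wr(f_1,\dots,f_n,f) : f \in \CC_{2n-1}[u]\}$ follows from $\calD^+_E\calD^-_E = \du^{2n}$ together with $\ker(\calD^+_E) = V^+_E$ and $\calD^-_E = D_{V_E}$, and the order-of-vanishing computation at $-z_i$ does produce the exponent sequence $(0,2,3,\dots,n)$, which is the open $1^{n-1}$ cell. However, your Schubert-type identification is off by a conjugate. Complementing the exponents of $V_E$ in $\{0,\dots,2n-1\}$ sends the Schubert type $\mu$ of $V_E$ to $(\mu^\vee)^*$, not to $\mu^\vee$. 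Check $n=3$, $\mu = (3,0,0)$: exponents $\{5,1,0\}$, complement $\{4,3,2\}$, giving Schubert type $(2,2,2) = (\mu^\vee)^*$, whereas $\mu^\vee = (3,3,0)$. Consequently the eigenspaces in question live in $M^{(\lambda^\vee)^*}$, not $M^{\lambda^\vee}$; the dimension count is unaffected since $\dim M^\nu = \dim M^{\nu^*}$, but the module identification you later invoke is wrong.

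The real gap is the converse. You reduce it to injectivity (fine) plus the claim that the Schubert intersection has cardinality $\dim M^{\lambda^\vee}$, and you flag the latter as ``the main obstacle'' — but you offer only a placeholder. The intersection $\scell \cap X_{1^{n-1}}(z_1) \cap \dots \cap X_{1^{n-1}}(z_n)$ has total expected codimension $2n(n-1) > n^2$ for $n \geq 3$, so the naive Littlewood--Richardson count is zero and Kleiman-type transversality cannot possibly give the answer. The nonemptiness is a genuine excess-intersection phenomenon forced by the osculating nature of the flags, and counting its points requires something substantive — essentially a relative transversality statement for the refined inverse Wronskian problem with $\Wr_V = w^{n-1}$ and prescribed ramification type $1^{n-1}$ at each $-z_i$, which is precisely what the paper's suggested route through \cite{MTV-transverse} is designed to supply. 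Your proposed identity $K_{(\lambda^\vee)',(n-1)^n} = \dim M^{\lambda^\vee}$ is at best a guess for what that count should be (and as written the two sides do not even live over the same size of tableau data: $(\lambda^\vee)' \vdash n$ while $(n-1)^n \vdash n(n-1)$, so $K_{(\lambda^\vee)',(n-1)^n} = 0$ as a Kostka number). You likely meant something like $K_{\lambda^*, (n-1)^n}$ with $\lambda \vdash n(n-1)$, but in any case you would still need to prove both the combinatorial identity and, more importantly, that the set-theoretic intersection actually has that cardinality. That transversality step is the heart of the conjecture and is not addressed.
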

Here $X_\mu(z)$ for $z \in \CC$ is a Schubert variety inside
$\Gr(n, \CC_{2n-1}[u])$; we are following the notation and conventions
of \cite[\S 2.1]{LP}.
It should be possible to prove this
by applying the machinery of \cite{MTV-transverse} to the 
$\gl_n(\CC[t])$-representation
$(\exterior{n-1} \CC^n)^{\otimes n}$, and using Schur--Weyl duality.
The author has verified that this works up to $n=3$, 
but a complete proof is beyond the intended scope of this paper.

If Conjecture~\ref{conj:D+} is correct, it still does not fully
characterize $\ker (\calD^+_E)$.  
A more complete answer would describe the
precise the relationship between $\ker (\calD^-_E)$ and 
$\ker (\calD^+_E)$, analogously to the way Theorem~\ref{thm:dual}
describes the relationship between 
$\ker (\calD^-_E)$ and $\ker (\star \calD^-_E)$.
One might hope that understanding this relationship could lead to a more
conceptual proof of Theorem~\ref{thm:main}.

A natural question is whether there is a more general form
of Theorem~\ref{thm:main}, for example, an identity inside
the full Bethe algebra in $\uea$, rather than 
just inside the Bethe subalgebra
of $\CC[\Sn]$.  One problem with this notion is that
in formulations of Theorem~\ref{thm:invwrsols} using the full
Bethe algebra, there is no uniform upper bound on the degrees of the 
polynomials involved; instead 
one has different bounds for different representations of $\gl_m$.
By contrast, working in $\CC[\Sn]$, we can say for any eigenspace 
$E \subset M^\lambda$,
$\ker (\calD^-_E)$ only involves polynomials of degree at most $2n$,
independent of $\lambda$.
V. Tarasov has pointed out that the results of \cite{TU}
provide a sort of analogue.  In particular Section~6 therein 
describes a factorization of a fixed differential operator, of the 
same form as our Theorem~\ref{thm:main}.  These results are
based on the Bethe ansatz in the context of 
$(\gl_k,\gl_m)$ duality on the exterior algebra 
$\extalg(\CC^k \otimes \CC^m)$.  The authors show that 
the image of
the Bethe algebras for $\gl_k$ and $\gl_m$ coincide on the representation
$\extalg(\CC^k \otimes \CC^m)$.   A similar result holds
for the Lie superalgebras $\gl_k$ and $\gl_{m|n}$ \cite{HM}.
This suggests that operators 
$\calD^-_n$ and $\calD^+_n$ should be regarded as
elements of a quotient of this common image, but coming from the two 
different factors of $\gl_n$ acting 
$\extalg(\CC^n \otimes \CC^n)$.
It would be great to see this worked out explicitly.

A related question is whether Theorem~\ref{thm:main} has an analogue 
for the XXX model.  In \cite{MTV-Sn}, Mukhin, Tarasov and Varchenko
define a Bethe subalgebra of $\CC[\Sn]$ of XXX type, 
which is a $1$-parameter deformation of $\bethe(z_1, \dots, z_n)$.
A paper of Uvarov \cite{Uvarov} generalizes the results of
\cite{TU} in this direction, and it would be valuable to have
versions of these results formulated concretely inside $\CC[\Sn]$.

Finally, it would be nice to have a more explicit formula for
the polynomials which express $\beta^+_{k,l}$ in terms 
in terms of $\beta^-_{k',l'}$, or for the map $\Theta^\lambda$
defined in the proof of Theorem~\ref{thm:polyfunctions}.  Since the elements
$\beta^+_{k,l}$ are (or are at least related to) ``coefficients'' 
of $\calD^+_n$, this may shed some light on the aforementioned problem 
of describing 
the relationship between $\ker (\calD^+_E)$ and $\ker(\calD^-_E)$.


\bigskip

\footnotesize%
\noindent
\textsc{K. Purbhoo, Combinatorics and Optimization Department, 
       University of Waterloo,  
       Waterloo, ON, N2L 3G1, Canada.} \texttt{kpurbhoo@uwaterloo.ca}.


\begin{thebibliography}{00}

\bibitem{EG-pole} A. Eremenko and A. Gabrielov,
\emph{Pole placement static output feedback for generic linear systems,}
SIAM J. Control Optim. \textbf{41} (2002), no. 1, 303--312 (electronic).

\bibitem{EG-elementary}
A. Eremenko and A. Gabrielov,
\emph{An Elementary Proof of the B. and M. Shapiro Conjecture for 
Rational Functions.} 
In: P. Br\"and\'en, M. Passare, M. Putinar (eds),
Notions of Positivity and the Geometry of Polynomials,  (2011) 
Trends in Mathematics, Springer, Basel. 

\bibitem{HM}
C. Huang and E. Mukhin,
\emph{The duality of $\gl_{m|n}$ and $\gl_{k}$ Gaudin models},
preprint, arXiv:1904.02753.

\bibitem{Ince}
E.\,L. Ince,
\emph{Ordinary Differential Equations},
Dover, New York, 1956.

\bibitem{LP} J. Levinson and K. Purbhoo,
\emph{A topological proof of the Shapiro--Shapiro conjecture},
Invent. Math. \textbf{226} (2021), 521--578.

\bibitem{MTV-reality}
E. Mukhin, V. Tarasov and A. Varchenko,
\emph{The B. and M. Shapiro conjecture in real algebraic geometry
and the Bethe Ansatz}, Ann. Math., \textbf{170} (2009), no. 2, 863--881.

\bibitem{MTV-transverse}
E. Mukhin, V. Tarasov and A. Varchenko,
\emph{Schubert calculus and representations of general linear group},
J. Amer. Math. Soc.,  \textbf{22}  (2009),  no. 4, 909--940.

\bibitem{MTV-Sn}
E. Mukhin, V. Tarasov and A. Varchenko,
\emph{Bethe subalgebras of the group algebra of the symmetric group},
Transformation Groups \textbf{18} (2013), 767--801.

\bibitem{OV}
A. Okounkov and A. Vershik, 
\emph{A new approach to representation theory of symmetric
groups. II}, J. Math. Sci. \textbf{131} (2005), no. 2, 5471--5494.

\bibitem{Pur-Gr}
K. Purbhoo,
\emph{Jeu de taquin and a monodromy problem for Wronksians
of polynomials}, Adv. Math.,  \textbf{224} (2010) no. 3, 827--862.


\bibitem{Pur-ribbon}
K. Purbhoo,
\emph{Wronskians, cyclic group actions, and ribbon tableaux},
Trans. Amer. Math. Soc., \textbf{365} (2013), 1977--2030.

\bibitem{sage}
W.\,A. Stein et~al., \emph{{S}age {M}athematics {S}oftware
({V}ersion 9.5)}, The Sage Development Team, 2022, {\tt
http://www.sagemath.org}.

\bibitem{Sp}
D. Speyer,
\emph{Schubert problems with respect to osculating flags
of stable rational curves},
Alg. Geom., \textbf{1} (2014) no. 1, 14--45.


\bibitem{Sot-F}
F. Sottile,
\emph{Frontiers of reality in Schubert calculus},
Bull. Amer. Math. Soc., \textbf{47}  (2010),  no. 1, 31--71.

\bibitem{TU}
V. Tarasov and F. Uvarov,
\emph{Duality for Bethe algebras acting on polynomials 
in anticommuting variables}, preprint, arXiv:1907.02117.


\bibitem{Uvarov}
F. Uvarov,
\emph{Difference operators and duality for 
trigonometric Gaudin and dynamical Hamiltonians}
SIGMA \textbf{18} (2022) 081.

\bibitem{W}
N. White,
\emph{Labelling Schubert intersections in the Grassmanian},
preprint, arXiv:1912.10090.

\end{thebibliography}
\end{document}